\DeclareMathAlphabet{\mathscr}{OT1}{pzc}{m}{it} 
\newcommand{\field}[1]{\mathbb{#1}}
\newcommand{\R}{\field{R}} 
\newcommand{\C}{\field{C}} 
\newcommand{\Q}{\field{Q}} 
\newcommand{\Z}{\field{Z}} 
\newcommand{\G}{\field{G}} 
\newcommand{\h}{\mathfrak{h}} 
\newcommand{\T}{\mathbb{T}} 
\newcommand{\inv}{^{-1}}
\newcommand{\Id}{\operatorname{Id}}
\newcommand{\End}{\operatorname{End}}
\newcommand{\Nm}{\operatorname{Nm}}
\newcommand{\GL}{\operatorname{GL}}
\newcommand{\SL}{\operatorname{SL}}
\newcommand{\Res}{\operatorname{Res}}
\newcommand{\pr}{\operatorname{pr}}
\theoremstyle{plain}
\newtheorem{thm}{Theorem}
\numberwithin{thm}{subsection}
\newtheorem{cor}[thm]{Corollary}
\newtheorem{lem}[thm]{Lemma}
\newtheorem{prop}[thm]{Proposition}
\theoremstyle{definition}
\newtheorem{defn}[thm]{Definition}
\theoremstyle{remark}
\newtheorem{rem}[thm]{Remark}
\title[Differential Operators for Hilbert Modular Forms]{Families of Differential Operators for Overconvergent Hilbert Modular Forms}
\author{Jon Aycock}
\date{\today}
\address{Jon Aycock\\
Department of Mathematics\\
University of Oregon\\
Fenton Hall\\
Eugene, OR 97403-1222\\
USA}
\email{jaycock@uoregon.edu}
\begin{document}

\begin{abstract}
We construct differential operators for families of overconvergent Hilbert modular forms by interpolating the Gauss--Manin connection on strict neighborhoods of the ordinary locus. This is related to work done by Harron and Xiao and by Andreatta and Iovita in the case of modular forms and by Zheng Liu for Siegel modular forms. It has applications to $p$-adic $L$-functions of CM fields.
\end{abstract}

\maketitle

\tableofcontents

\section{Introduction}
\subsection{Motivation}
In \cite{Serre}, Serre introduced the idea of using $p$-adic families of modular forms to $p$-adically interpolate values of $L$-functions. In particular, he used the family of Eisenstein series with $q$-expansion
\begin{equation}
2G_{2k}(q) = \zeta(1-2k) + 2\sum_{n=1}^\infty \sigma_{2k-1}(n)q^n.
\end{equation}

He showed that congruences between the coefficients of $q^n$ for $n \geq 1$ for particular values of $k$ imply the existence of such a congruence between the constant terms $\zeta(1-k)$ as well, and used this to show that (an appropriate normalization of) the Riemann zeta function is $p$-adically continuous when restricted to inputs of negative odd integers. This is one construction of the Kubota--Leopoldt $p$-adic zeta function, which interpolates the values of the Riemann zeta function at negative odd integers.

Serre was able to use a well-known family of modular forms whose values at the cusp $\infty$ are equal to special values of the Riemann zeta function. Though the result was generalized to Dedekind zeta functions for totally real fields in \cite{DeligneRibet}, these can be tricky to come up with in more generality. One important addition to the theory was Katz's use of differential operators, in particular those built from the Gauss--Manin connection $\nabla$, as a key ingredient in the construction of these families in \cite{KatzQI,Katz}. Katz's operators give a $p$-adic analog of the Maass--Shimura operators which Shimura used to prove algebraicity results in \cite{Shi76,Shi00}, and which were adapted by Harris to prove algebraicity results for higher rank groups in \cite{Har81,Har86}.

Katz leveraged this idea to $p$-adically interpolate the zeta function for a CM field $K$ using Hilbert modular forms attached to its maximal totally real subfield $K^+$. He started with a holomorphic Eisenstein series and used successive powers of these differential operators to produce a family of not necessarily holomorphic Eisenstein series. By Damerell's formula (in e.g. \cite{GSDamerell}), sums of the values of these Eisenstein series at the CM points of the modular curve give the central values $L(\chi,s_0)$ of the $L$-functions for a specific class of Hecke characters, known as characters of type $A_0$. Viewing the zeta function as a function on the space of characters, this fact reduces the $p$-adic interpolation of $\zeta_K$ to the study of how powers of the Gauss--Manin connection behave $p$-adically.

As noted in his introduction, Katz's method only succeeds when we can choose an ``ordinary" CM type, which is only possible when every prime above $p$ in the totally real field $K^+$ splits in $K/K^+$. This is due to the fact that his differential operators are only defined over the ordinary locus, which does not contain the CM points that are supersingular at $p$. The present work extends these operators to be defined on the overconvergent loci, using methods from \cite{L1} and using the geometry developed in \cite{AIP,AIP2}. This extension allows Damerell's formula to be used in more general situations, whenever the Eisenstein series is defined at the CM points.

\subsection{Description of Results}

The present work culminates in the following theorem, which is Theorem \ref{theorem} in the body of the paper:

\begin{thm}\label{introtheorem}
Fix a tuple $\underline{v} = (v_i)$ with each $v_i>0$. For each embedding $\sigma \colon F \to K$, and any $k \geq 1$, there is a differential operator $\nabla_\sigma^k$ acting on families of nearly $\underline{v}$-overconvergent Hilbert modular forms, which raises the weight by $2k\sigma$ and the type by $k$. The operators $\nabla_\sigma^k$ and $\nabla_\tau^\ell$ commute for any pair of embeddings $\sigma$ and $\tau$.
\end{thm}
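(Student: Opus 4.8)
The plan is to construct these operators from the Gauss--Manin connection on the de Rham realization of the universal abelian scheme, following Katz's template but replacing his use of the Frobenius-equivariant unit-root splitting over the ordinary locus with the overconvergent geometry of \cite{AIP,AIP2} together with the interpolation techniques of \cite{L1}. First I would assemble, over a strict neighborhood $\mathcal{X}(\underline{v})$ of the ordinary locus, the de Rham sheaf $\mathcal{H} = H^1_{\mathrm{dR}}$ of the universal (semi-)abelian scheme, its decomposition $\mathcal{H} = \bigoplus_\sigma \mathcal{H}_\sigma$ into rank-two pieces under the $\mathcal{O}_F$-action, the Hodge filtration $\omega_\sigma \hookrightarrow \mathcal{H}_\sigma$, the Gauss--Manin connection $\nabla \colon \mathcal{H} \to \mathcal{H} \otimes \Omega^1_{\mathcal{X}(\underline{v})}$, and the Kodaira--Spencer isomorphism $\Omega^1_{\mathcal{X}(\underline{v})} \cong \bigoplus_\sigma \omega_\sigma^{\otimes 2}$. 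The nearly $\underline{v}$-overconvergent forms are then sections of the tensor constructions built from $\mathcal{H}$, with the type recording how deep into the filtration of the $\mathcal{H}_\sigma$ one has descended.

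To define $\nabla_\sigma$ on a weight-$\kappa$ family I would compose three maps: the inclusion of the weight-$\kappa$ automorphic sheaf into the relevant construction on $\mathcal{H}$, the connection $\nabla$, and the projection onto the $\sigma$-summand $\omega_\sigma^{\otimes 2}$ provided by Kodaira--Spencer. The result is a section lying one filtration-step deeper and twisted by $\omega_\sigma^{\otimes 2}$, that is, a nearly $\underline{v}$-overconvergent form of weight $\kappa + 2\sigma$ and type one greater. Because the target is again a space of nearly $\underline{v}$-overconvergent forms carrying a $\nabla_\sigma$-action, iterating $k$ times yields $\nabla_\sigma^k$, which raises the weight by $2k\sigma$ and the type by $k$. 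The hard part will be precisely this construction over $\mathcal{X}(\underline{v})$: with no unit-root splitting available off the ordinary locus one cannot project back to honestly holomorphic forms, so I must verify that $\nabla$ and the Kodaira--Spencer identifications are defined and suitably bounded on the strict neighborhood, that $\nabla_\sigma$ genuinely preserves the overconvergence radius $\underline{v}$, and that the whole construction interpolates across the $p$-adic family of weights $\kappa$; this is where the machinery of \cite{L1} and \cite{AIP,AIP2} is needed.

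Commutativity I would deduce from the integrability of the Gauss--Manin connection. Since the construction keeps the output inside the full nearly-overconvergent sheaves built from $\mathcal{H}$ rather than projecting back to holomorphic forms --- which is exactly what forces the type to grow --- the operators $\nabla_\sigma$ and $\nabla_\tau$ are, up to the Kodaira--Spencer twists, the partial covariant derivatives of the single flat connection $\nabla$ in the two coordinate directions indexed by $\sigma$ and $\tau$. Vanishing of the curvature $\nabla^2$ then gives $\nabla_\sigma \nabla_\tau = \nabla_\tau \nabla_\sigma$, and commutativity of the iterates $\nabla_\sigma^k$ and $\nabla_\tau^\ell$ follows at once.
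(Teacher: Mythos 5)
Your skeleton --- Gauss--Manin plus Kodaira--Spencer, with the type growing by one precisely because $\nabla$ does not preserve the Hodge filtration, and commutativity deduced from integrability --- is the right one, and it does prove the theorem for a single algebraic weight $\kappa \in \Z[I]$; that is essentially Remark~\ref{RaiseTypeBySigma} of the paper, and the paper likewise sources commutativity to integrability via \cite[Lemma 2.1.14]{Katz}. But the theorem is about \emph{families}, and there your central step, ``the inclusion of the weight-$\kappa$ automorphic sheaf into the relevant construction on $\mathcal{H}$,'' does not exist: for a non-integral $p$-adic weight, or a family parametrized by an affinoid $\mathcal{U} \subset \mathcal{W}$, there is no finite-rank tensor or symmetric-power construction on $\mathcal{H}$ whose sections are weight-$\kappa$ forms, so the three-map composite you propose cannot even be written down. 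You acknowledge that making ``the whole construction interpolate across the $p$-adic family of weights'' is where \cite{L1} and \cite{AIP,AIP2} enter, but that sentence \emph{is} the theorem; deferring it to cited machinery leaves the proof without its main content. (Also, your stated worry --- that $\nabla$ and Kodaira--Spencer be ``suitably bounded'' and preserve the radius $\underline{v}$ --- is not where the difficulty lies: these are algebraic objects simply restricted to $\mathcal{X}(\underline{v})$.)

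Concretely, the paper fills this gap in three steps you would need to supply. First, it works with torsors rather than sheaves: the filtered frame bundle $\mathcal{T}^{\times,+}_{\mathcal{H},an}(\underline{v})$ is fiber-producted, over the frame bundle of $\underline{\omega}$, with the Andreatta--Iovita--Pilloni torsor $\mathcal{T}^{\times}_{\mathcal{F},w}(\underline{v})$ --- whose construction requires the canonical subgroup, the partial Igusa tower, the subsheaf $\mathcal{F} \subset \underline{\omega}$ and the Hodge--Tate map --- yielding a torsor $\mathcal{T}^{\times,+}_{\mathcal{H},w}(\underline{v})$ under the group $Q_w$ (Section~\ref{NOHMFs}). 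Second, a $w$-analytic family weight $\kappa$ gives a representation $W_{\kappa,w}$ of $\T_w$, inflated to $Q_w$, and the sheaf of nearly overconvergent forms is the associated bundle of $V_{\kappa,w} = W_{\kappa,w} \otimes \mathcal{O}_{H\backslash Q}$; this is how weight interpolation is achieved. Third --- and this is the key structural obstruction your proposal never confronts --- since Gauss--Manin does not respect the filtration, the $Q_w$-torsor carries no principal connection, so ``apply $\nabla$'' is not a legal move on such an associated bundle; the paper needs the $(\mathfrak{g},Q)$-module formalism, i.e.\ the compatibility $\operatorname{Ad}(g)(X)\cdot v = g \cdot X \cdot g\inv \cdot v$, to check in Proposition~\ref{ConnectionOnAssociatedBundleWithgAction} that $v \mapsto Dv + X(D,\alpha)\cdot v$ is homogeneous and hence well defined, with the explicit formula for $\mu_\sigma^-$ showing that $\nabla_\sigma$ raises the type by exactly $\sigma$. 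These steps, not boundedness estimates, are the substance of the proof.
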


These $\nabla_\sigma^k$ are the $p$-adic analogs of the Maass-Shimura operators in the Archimedean case, as we prove in Section \ref{NEARLYComparison}. The absence of these ``overconvergent" operators in \cite{Katz} prevented Katz from constructing $p$-adic $L$-functions for CM fields in the case that the CM points were not ordinary; i.e., when $p$ is not split in $K/K^+$. The overconvergent operators for elliptic modular forms have been previously constructed, and are used in \cite{AI} to construct $p$-adic $L$-functions for quadratic imaginary fields; our construction lays the groundwork to generalize that construction to a general CM field.

In defining the differential operators, we work rigid analytically. However, we give a discussion of the integrality of the operator in Section \ref{IntegralitySection}.

As in \cite{AIP2}, we are careful to make a distinction between two possible meanings of the phrase ``Hilbert modular form." Given a totally real field $F$, the term may refer to the automorphic forms on either of the following groups:
\begin{equation} G = \operatorname{Res}_{\mathcal{O}_F/\Z} \GL_2, \quad \text{or} \quad G^* = G \times_{\operatorname{Res}_{\mathcal{O}_F/\Z}\G_m} \G_m. \end{equation}
For any commutative ring $R$, the $R$-points of $G$ are simply the $2 \times 2$ invertible matrices with entries in $\mathcal{O}_F \otimes_\Z R$, while the $R$-points of $G^*$ are those matrices from $G(R)$ with determinant in $R^\times \subset (\mathcal{O}_F \otimes_\Z R)^\times$. Each of these groups has an advantage: $G$ has a nicer Hecke theory including a commutative Hecke algebra; while $G^*$ allows us to use geometric tools, since its associated PEL-type moduli problem is representable by a Shimura variety. The inclusion $G^* \subset G$ gives a restriction map from the space of automorphic forms on $G$ to those on $G^*$, and in fact there is an explicit geometric criterion that picks out the space of automorphic forms for $G$ inside the space of automorphic forms for $G^*$. In the following, we focus first on $G^*$ so that we may use the geometric tools it affords us, after which we shift our attention to this criterion which allows us to transport our results to the group $G$. We get the following theorem, Theorem \ref{secondtheorem} in the body of this paper, which allows these operators to be used in either situation:

\begin{thm}
The differential operators $\nabla_\sigma^k$ constructed in Theorem \ref{introtheorem} preserve the space of Hilbert modular forms for $G$ inside the space of Hilbert modular forms for $G^*$.
\end{thm}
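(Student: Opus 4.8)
The plan is to reduce the statement to an equivariance property of the operators $\nabla_\sigma^k$ with respect to the action that cuts out $G$-forms inside $G^*$-forms. Recall from \cite{AIP2} that the space of Hilbert modular forms for $G$ is identified with the subspace of forms for $G^*$ that are invariant under the natural action of the group of totally positive units $\mathcal{O}_F^{\times,+}$: a unit $\epsilon$ acts on the moduli problem for $G^*$ by rescaling the polarization by $\epsilon$, inducing an automorphism $[\epsilon]$ of the (rigid) Shimura variety, and a $G^*$-form $f$ descends to a $G$-form precisely when $[\epsilon]^* f = f$ for every $\epsilon$, where the appropriate automorphy factor has been folded into the action on sections of the automorphic sheaf. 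Granting this criterion, it suffices to prove that $\nabla_\sigma^k$ commutes with each $[\epsilon]^*$; then $[\epsilon]^* f = f$ immediately forces $[\epsilon]^*(\nabla_\sigma^k f) = \nabla_\sigma^k f$, so the operator carries the $G$-subspace into itself.

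To set up the equivariance, first I would describe the action of $[\epsilon]$ on the geometric objects entering the construction. The automorphism $[\epsilon]$ lifts to the universal semiabelian scheme, and hence to its relative de Rham bundle $\mathcal{H}^1_{\mathrm{dR}}$, preserving both the Hodge filtration $\omega \subset \mathcal{H}^1_{\mathrm{dR}}$ and the Kodaira--Spencer isomorphism, and commuting with the $\mathcal{O}_F$-module structure on $\mathcal{H}^1_{\mathrm{dR}}$. This last point holds because the multiplication action of $\epsilon \in \mathcal{O}_F$ and the polarization-rescaling action are both induced by commuting endomorphisms of the abelian scheme, and it is exactly what guarantees that the projections onto the $\sigma$-isotypic components, used to isolate the $\sigma$-direction in the definition of $\nabla_\sigma$, are themselves $[\epsilon]$-equivariant.

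The core of the argument is then to check that every ingredient of $\nabla_\sigma^k$ is compatible with $[\epsilon]^*$. The Gauss--Manin connection is functorial under base change and in particular satisfies $[\epsilon]^* \circ \nabla = \nabla \circ [\epsilon]^*$, so the differentiation part is equivariant for free. The remaining ingredients are the $\sigma$-isotypic projections, handled above, and the splitting of the Hodge filtration over the strict neighborhood of the ordinary locus that supplies the nearly overconvergent structure. Since this splitting is constructed canonically in the body of the paper, extending the unit-root splitting over the ordinary locus in the manner of \cite{L1,AIP,AIP2}, it is natural with respect to automorphisms of the moduli space, hence $[\epsilon]$-equivariant. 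Composing equivariant maps, $\nabla_\sigma^k$ commutes with $[\epsilon]^*$, which is exactly what we need.

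The main obstacle is precisely the equivariance of this splitting: unlike $\nabla$ and the $\mathcal{O}_F$-projections, whose naturality is formal, the splitting over the overconvergent locus is a more delicate object, and the argument hinges on the fact that it was produced by a canonical construction rather than an arbitrary choice. I would therefore isolate the exact statement of naturality of the splitting and verify it directly against the definition of $[\epsilon]$. A secondary bookkeeping point is to confirm that the weight shift by $2k\sigma$ and the type shift by $k$ are consistent with the automorphy factor through which $[\epsilon]$ acts, so that the invariance condition in weight $\kappa$ is converted into the invariance condition in weight $\kappa + 2k\sigma$ with type raised by $k$; this amounts to checking that $[\epsilon]^*$ scales sections by the expected power of $\epsilon$ in each $\sigma$-component, which is routine once the normalizations are pinned down.
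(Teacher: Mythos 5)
Your overall skeleton matches the paper's: identify the $G$-forms inside the $G^*$-forms as the subspace on which $\mathcal{O}_F^{\times,+}$ acts through its nebentypus via the polarization-rescaling/slash action (Equation \eqref{OFAction}), then show that $\nabla_\sigma^k$ commutes with that action. Your first two ingredients --- functoriality of the Gauss--Manin connection under $[\epsilon]$ and $[\epsilon]$-equivariance of the $\mathcal{O}_F$-isotypic projections (hence of the Kodaira--Spencer maps $\operatorname{KS}_\sigma$) --- are correct and are exactly the content of Proposition \ref{DescentOfConnection}.

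However, there is a genuine gap in what you identify as the core of the argument. You hinge everything on the naturality of a ``canonical splitting of the Hodge filtration over the strict neighborhood of the ordinary locus, extending the unit-root splitting.'' No such splitting exists in this paper's construction, and avoiding it is precisely the point of the $(\mathfrak{g},Q)$-module formalism: this is how the paper answers the question of \cite{HX} in the affirmative. Nearly overconvergent forms here are sections of $\mathcal{V}_{\kappa,w}^{\nu}(\underline{v}) = \mathcal{T}^{\times,+}_{\mathcal{H},w}(\underline{v}) \times_{Q_w} V_{\kappa,w}^{\nu}$, where $\mathcal{T}^{\times,+}_{\mathcal{H},w}(\underline{v})$ is the fiber product of the filtered frame bundle of $\mathcal{H}$ with the Andreatta--Iovita--Pilloni torsor $\mathcal{T}^{\times}_{\mathcal{F},w}$, and $\nabla_\sigma$ acts through the $(\mathfrak{g},Q)$-module structure on $V_{\kappa,w}$; the Hodge filtration is never split. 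So the step ``isolate the exact statement of naturality of the splitting and verify it against the definition of $[\epsilon]$'' cannot be carried out, and as planned your proof omits the ingredient that actually does the work. What replaces it in the paper is: (i) the fiber of $\mathcal{T}^{\times,+}_{\mathcal{H},w}$ over a point $x = (A,\iota,\psi,\lambda)$ depends only on the substring $(A,\iota)$ --- the canonical subgroup, the sheaf $\mathcal{F}$, and the maps $HT_w$ are all polarization-independent --- so the fibers over $x$ and $g_\epsilon \cdot x$ are canonically identified, and under this identification $[\epsilon]^*$ becomes the natural action of the element $g_\epsilon \in Q$ on the module $V_{\kappa,w}$; and (ii) the connection commutes with this $Q$-action, which is exactly the homogeneity computation of Proposition \ref{ConnectionOnAssociatedBundleWithgAction}, resting on the $(\mathfrak{g},Q)$-compatibility $\operatorname{Ad}(g)(X)\cdot v = g \cdot X \cdot g\inv \cdot v$. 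With your third ingredient replaced by (i) and (ii), your argument closes up and coincides with the paper's proof of Theorem \ref{secondtheorem}.
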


In Section 2, we present the geometric construction of holomorphic Hilbert modular forms, followed by families of overconvergent Hilbert modular forms, on the group $G^*$ from \cite{AIP2}. In Section 3, we generalize to nearly holomorphic and nearly overconvergent forms, including the scaffolding of $(\mathfrak{g},Q)$-modules which are used to construct the differential operator as in \cite{L1}. In Section 4, we transport the results of the previous section to the group $G$ using a criterion similar to the one mentioned above.

\subsection{Relationship to Recent Developments}

This paper adds to the recent investigations into extending $p$-adic differential operators past the ordinary locus, or into when the ordinary locus is empty, including those in \cite{dSG16, dSG19,EM20,U,L1,L2}. Shortly after posting this paper on the arXiv, the author learned from Giacomo Graziani that his dissertation \cite{Graziani} (currently in preparation) is closely related. His paper uses the VBMS (vector bundles with marked sections) formalism present in e.g. \cite{AI}, while we use $(\mathfrak{g},Q)$-modules as in \cite{L1}.

In \cite{HX}, the authors ask whether or not their construction can be adapted to avoid a choice of the Hodge filtration. Building on the work of \cite{AI,L1}, this work answers that question in the affirmative. Additionally, other works note that in order to use nearly overconvergent Hilbert modular forms to interpolate the values of $p$-adic $L$-functions, a ``canonical splitting of the Hodge filtration" must be chosen; see for example the introduction to \cite{AI} for this in the situation of Damerell's formula. Our methods translate this into a need for a canonical choice of frame for the relative de Rham cohomology of the modular curve in the nearly holomorphic case. Seeing as a canonical trivialization (i.e., a frame) for the modular sheaf $\underline{\omega}$ must already be chosen in Serre's setting, this is a natural bit of data to consider. For the nearly overconvergent case, we take preimages of the Andreatta--Iovita--Pilloni torsors for overconvergent Hilbert modular forms in the space of frames for the de Rham cohomology.

This construction works for elliptic modular forms, and seems eminently generalizable to be used for the automorphic forms on more groups, such as Hilbert--Siegel and hermitian modular forms. The author of the present paper is currently working on a sequel extending his approach to the higher rank setting, in particular Hilbert--Siegel modular forms.

\subsection{Acknowledgements}
I am grateful to have received support from NSF Grant DMS-1751281. I would like to thank my advisor Ellen Eischen, as well as Nicolas Addington, Sean Howe, and Zheng Liu. In addition, I would like to thank Giacomo Graziani for alerting me to the fact that his dissertation is about the same subject; I hope that our two perspectives give a fuller view than either would have alone.

\section{Hilbert Modular Forms}
Fix a totally real field $F$ of degree $[F:\Q] = d>1$. In this section, we will build up the geometric theory of holomorphic and overconvergent Hilbert modular forms of level $\Gamma_1(N)$. Here a Hilbert modular form is an automorphic form on the group $G^*$.

\subsection{Weights}\label{starweights}

Let $K$ be a local field that splits $F$, so that $F \otimes_\Q K = K^d$; it should often be a $p$-adic field for some chosen prime $p$, but in some contexts it is allowed to be $\R$. We consider the algebraic group $\T = \Res_{\mathcal{O}_F/\Z}\G_m$. The weight space of classical Hilbert modular forms is the space of characters of the split torus $\T_{/K}$.

Each embedding $\sigma \colon F \to K$ determines a character of $\T_{/K}$. Let $I$ denote the set of such embeddings. All algebraic weights (i.e., classical weights with no nebentypus at $p$, if $K$ is a $p$-adic field) are of the form $\sum_{\sigma} k_\sigma \sigma \in \Z[I]$, often recorded as a tuple $(k_\sigma)_\sigma$.

We fix a prime $p$, and a finite extension $K$ of $\Q_p$. We define the {weight space} $\mathcal{W}$, a rigid analytic space defined over $K$ associated to the algebra $\Z_p\llbracket\T(\Z_p)\rrbracket$. The $\C_p$-points of $\mathcal{W}$ parametrize continuous homomorphisms $\T(\Z_p) \to \C_p^\times$, which are the $p$-adic weights of Hilbert modular forms. As a rigid analytic space, it is isomorphic to a finite disjoint union of open unit balls of dimension $d$, where each component is labeled by a (finite order) character of the torsion subgroup of $\T(\Z_p)$. A character of $\T(\Z_p)$ is called algebraic if it is in $\Z[I]$, and locally algebraic if it is the product of a finite order character and an algebraic character. Locally algebraic characters are dense in the weight space.

For any value\footnote{If $p$ ramifies with index $e$, $w \in \frac{1}{e}\Z$. Writing $(p)^w$ should make sense as an ideal of $\mathcal{O}_F$, and we will sometimes write $p^w$ abusively even if there is no such element.} $w \in v(\mathcal{O}_F)$, the rigid analytic group $\T$ has two important subgroups. The first, $\T_w^0$, consists of units congruent to $1$ modulo $(p)^w$. The second, $\T_w$, is generated by $\T_w^0$ and $\T(\Z_p)$.

Let $A$ be an affinoid algebra, and let $\kappa \colon \T(\Z_p) \to A^\times$ be a family of characters parametrized by the affinoid space $\mathcal{U} = \operatorname{Sp}A$. As asserted in \cite{AIP2}, such a character is $w$-analytic for some $w$, meaning that it factors as a composition as follows for some linear map $\psi$:
\begin{equation}
	\T_w^0(\Z_p) = 1 + p^w(\mathcal{O}_F \otimes \Z_p) \xrightarrow{\log_F} p^w\mathcal{O}_F \otimes \Z_p \xrightarrow{\psi} pA \xrightarrow{\exp} 1 + pA \subset A^\times.
\end{equation}

There is a universal character $\kappa^{un} \colon \mathcal{W} \times \T(\Z_p) \to \C_p^\times$, where $\kappa^{un}(x,t)$ evaluates the character associated to the point $x$ at the input $t \in \T(\Z_p)$, corresponding to the natural character $\T(\Z_p) \to \mathcal{O}_K\llbracket\T(\Z_p)\rrbracket$ sending $g$ to $g$. The universal character is $1$-analytic. Any family of characters as above may be specified by pulling back the universal character by a map $\mathcal{U} \to \mathcal{W}$.

\subsection{Moduli Setup}
Let $\mathfrak{c}$ be an ideal of $F$, $p$ a prime number, and $N \geq 5$ an integer prime to $p$. A $\mathfrak{c}$-polarized Hilbert--Blumenthal abelian variety (HBAV, we suppress $\mathfrak{c}$ from the notation) defined over a base scheme $S$ is a string $(A,\iota,\psi,\lambda)$ where
\begin{itemize}
	\item $A \to S$ is an semi-Abelian scheme\footnote{A group scheme which is an extension of an Abelian variety by a torus} of relative dimension $d$,
	\item $\iota \colon \mathcal{O}_F \to \End(A)$ is a real multiplication on $A$,
	\item $\psi \colon \mu_N \otimes_\Z \mathcal{D}_F\inv \to A$ is a closed embedding, and
	\item $\lambda \colon A \otimes_{\mathcal{O}_F} \mathfrak{c} \to A^\vee$ is a $\mathfrak{c}$-polarization.
\end{itemize}

If $\mathfrak{c}$ is principal, we say $(A,\iota,\psi,\lambda)$ is principally polarized. We let $\mathfrak{c}$ and the degree of the isogeny $\lambda$ be prime to $p$. Often, we will use $A$ as a shorthand for the entire string.

Let $S$ be a scheme over $\operatorname{Spec}\Z\left[N\inv\right]$, $X \to S$ be the moduli space of HBAVs over $S$, and $\pi \colon \mathcal{A} \to X$ be the universal HBAV over $X$. The modular sheaf is $\underline{\omega} = \pi_*\Omega^1_{\mathcal{A}/X}$. This is locally free of rank $d$ as a sheaf of $\mathcal{O}_X$ modules, with an action of $\mathcal{O}_F$. There is a largest open subscheme $X^R$ of $X$, such that $\underline{\omega}$ restricts to a locally free sheaf of $\mathcal{O}_F \otimes_\Z \mathcal{O}_{X^R}$-modules. If we use $S = \operatorname{Spec}\mathcal{O}_K$ as our base, the complement of $X^R$ in $X$ is empty if $p$ does not divide the discriminant of $F$. In general, it is of codimension $2$ in the fiber of $X$ over the closed point of $S$. The space $X^R$ is known as the Rapoport locus, and the condition that a HBAV $A$ corresponds to a point of $X^R$ is called the Rapoport condition.

\begin{rem}
In the definition of HBAV, we really want $A$ to be an Abelian variety, rather than a semi-Abelian scheme. If we take this alternate definition, and write $Y$ as the moduli space of these varieties, then $X$ is a projective toroidal compactification of $Y$. We write $C = X \setminus Y$, and refer to $C$ as the boundary or the cusps. Allowing semi-Abelian schemes in the definition of HBAV is similar to talking about the moduli space of generalized elliptic curves and the universal generalized elliptic curve when discussing elliptic modular forms.
\end{rem}

\subsection{Holomorphic Hilbert Modular Forms}

Over $X_{/K}$, the bundle $\underline{\omega}$ decomposes as a direct sum $\underline{\omega} = \bigoplus_\sigma \underline{\omega}_\sigma$, so that $c \in \mathcal{O}_F$ acts on $\underline{\omega}_\sigma$ as $\sigma(c) \in K$. For a tuple of positive integers $\kappa = (k_\sigma)_\sigma \in \Z_{\geq 1}[I]$, we can form the line bundle $\underline{\omega}_\kappa \coloneqq \underline{\omega}_{\sigma_1}^{\otimes k_{\sigma_1}} \otimes \dots \otimes \underline{\omega}_{\sigma_d}^{\otimes k_{\sigma_d}}$ over $X_{/K}$.

\begin{defn}
A Hilbert modular form of level $\Gamma_1(N)$ and weight $\kappa = (k_\sigma)_\sigma$ is a global section of $\underline{\omega}_\kappa$. The $K$-vector space of Hilbert modular forms of level $\Gamma_1(N)$ and weight $\kappa$ is thus $H^0(X,\underline{\omega}_\kappa)$.
\end{defn}

We remark that $T_{\underline{\omega}}^\times \coloneqq \operatorname{Isom}_{X,F}(\mathcal{O}_F \otimes_\Z \mathcal{O}_X,\underline{\omega})$, the $\mathcal{O}_F$-frame bundle of $\underline{\omega}$ is a $\T$-torsor.\footnote{i.e., the frame bundle of $\underline{\omega}$ taking the $\mathcal{O}_F$-module structure into account. The subscripts $X$ and $F$ denote the fact that these isomorphisms should be $\mathcal{O}_X$-linear and $\mathcal{O}_F$-linear, respectively.} This means that the ring of functions of $T_{\underline{\omega},/K}^\times$ is graded by the characters of $\T_{/K}$, or rather by the algebraic weights of Hilbert modular forms. The graded portion corresponding to the weight $\kappa = (k_\sigma)_\sigma$ is $\underline{\omega}_\kappa$, and we find that the ring of Hilbert modular forms is the space of global sections of $\bigoplus_{\kappa \in \Z[I]}\underline{\omega}_\kappa = \mathcal{O}_{T_{\underline{\omega}}^\times}$.

Each line bundle $\underline{\omega}_\kappa$ can be recovered from $\mathcal{O}_{T_{\underline{\omega}}^\times}$ by considering the homogeneous functions with the property that for any $g \in \T$, any HBAV $A$, and any frame $\alpha \colon \mathcal{O}_F \otimes_\Z \mathcal{O}_X \xrightarrow{\sim} \underline{\omega}$,
\begin{equation}\label{explicithomogeneitykappa}
f(A,\alpha g) = \left(\prod \sigma(g)^{-k_\sigma}\right) f(A,\alpha) = \kappa(g\inv) f(A,\alpha).
\end{equation}
Somewhat less explicitly, we may view these as functions from $T^\times_{\underline{\omega}}$ to the representation $W_\kappa$ of $\T$, defined over $K$. The action of $\T$ is $g \cdot w = \kappa(g)w$, and the functions are homogeneous in the sense that $f(A,\alpha g) = g\inv \cdot f(A,\alpha)$.

\begin{rem}
Homogeneous functions on $T^\times_{\underline{\omega}}$ which are homogeneous as in Equation \eqref{explicithomogeneitykappa} are identified with sections of $\underline{\omega}_\kappa$ as follows. The pullback of $\underline{\omega}$ to $T^{\times}_{\underline{\omega}}$ is canonically trivialized by $\alpha$, giving a trivialization $\alpha_\sigma$ of $\underline{\omega}_\sigma$ for each $\sigma$, and thus a trivialization $\alpha_\kappa$ of $\underline{\omega}_\kappa$ with $\alpha_\kappa(1) = \bigotimes_\sigma \alpha_\sigma(1)^{\otimes k_\sigma} \in \underline{\omega}_\kappa$. Acting on the trivialization by some $g \in \T(K)$ sends $\alpha_\sigma$ to $\sigma(g)\alpha_\sigma$, and $(g\alpha)_\kappa = \kappa(g)\alpha_\kappa$.

Take a section $v$ of $\underline{\omega}_\kappa$ defined on all of $X_{/K}$, and pull it back to a section over $T^\times_{\underline{\omega}}$. Its value $v(A,\alpha)$ at a point corresponding to a HBAV $A$ with a trivialization $\alpha$ of its cotangent bundle is a multiple $f(A,\alpha)$ of the canonical basis $\alpha_\kappa(1)$ for the line bundle $\underline{\omega}_\kappa$. Since $v$ was pulled back from a section defined over $X$, we have that $v(A,\alpha) = v(A,\alpha g)$ for any $g \in \T(K)$. But the canonical trivialization of the fiber of $\underline{\omega}_\kappa$ over $(A,\alpha g)$ is $\kappa(g)$ times that of the canonical trivialization over $(A,\alpha)$. Thus we have
\begin{equation}
	f(A,\alpha g) = \kappa(g\inv) f(A,\alpha).
\end{equation}
\end{rem}

\subsection{Comparison to Classical Theory} \label{holocomparison}
In the classical theory, Hilbert modular forms are presented as holomorphic functions satisfying some transformation conditions, which is part of why we call them holomorphic here. We compare our theory, which is based on geometry, to this one. It serves as a base for Section \ref{NEARLYComparison}, in which we give a similar comparison for nearly holomorphic Hilbert modular forms. For this section, we let $K = \R$.

Let $\h = \left\{ z \in \C \,\mid\, \operatorname{Im}(z) > 0 \right\}$ be the upper half-plane, and consider the product $\h_F = \prod_{\sigma\in I}\h = \{(z_\sigma)_\sigma \mid z_\sigma \in \h \text{ for all } \sigma \in I\}$. Then let $\Gamma$ be a congruence subgroup of $\SL_2(\mathcal{O}_F)$.\footnote{This is what we want for $G^*$. For $G$, elements of $\Gamma$ should have totally positive determinant, not just determinant $1$.} It acts on $\h_F$ by
\begin{equation}
	\gamma = \begin{pmatrix}
		a & b \\ c & d
	\end{pmatrix}, \qquad \gamma \cdot (z_\sigma)_\sigma = \left( \frac{\sigma(a)z_\sigma + \sigma(b)}{\sigma(c)z_\sigma + \sigma(d)} \right)_\sigma.
\end{equation}
A Hilbert modular form is a function on $\h_F$ which behaves well with respect to this action, as made precise in Definition \ref{DefnHoloHMF}.
\begin{defn} \label{DefnHoloHMF}
A Hilbert modular form of level $\Gamma$ and weight $(k_\sigma)_\sigma$ is a holomorphic function $f \colon \h_F \to \C$ such that for all $\gamma \in \Gamma$ and $\underline{z} = (z_\sigma)_\sigma \in \h_F$,
\begin{equation}
	f(\gamma \cdot \underline{z}) = \left(\prod_\sigma (\sigma(c)z_\sigma + \sigma(d))^{k_\sigma}\right) f(\underline{z}).
\end{equation}
\end{defn}

Let $\Gamma = \Gamma_1(N)$. We consider the non-compactified modular variety $Y$, the largest open subscheme of $X$ for which the pullback of the universal HBAV is an Abelian scheme. There is a morphism $\theta \colon \h_F \to T^\times_{\underline{\omega},/Y(\C)}$ sending a tuple $(z_\sigma)_\sigma$ to a principally polarized HBAV over $\C$ with a trivialization of its cotangent bundle. Note that $\mathcal{O}_F$ acts naturally on $\mathcal{O}_F \otimes_\Z \C \cong \prod_\sigma \C$, and let $L_{\underline{z}}$ be the lattice inside $\prod_\sigma \C$ generated as an $\mathcal{O}_F$-module by $(1)_\sigma$ and $(z_\sigma)_\sigma$. Then we write $\theta(\underline{z}) = (A_{\underline{z}},\iota_{\underline{z}},\psi_{\underline{z}},\lambda_{\underline{z}},\omega_{\underline{z}})$, where
\begin{enumerate}
	\item $A_{\underline{z}}$ is the complex Abelian variety whose $\C$-points are $(\mathcal{O}_F \otimes \C)/L_{\underline{z}}$,
	\item the action $\iota_{\underline{z}}$ of $\mathcal{O}_F$ on $A_{\underline{z}}$ is induced by its natural action on $\mathcal{O}_F \otimes_\Z \C$, which descends to the quotient because $\mathcal{O}_F L_{\underline{z}} \subset L_{\underline{z}}$,
	\item $\psi_{\underline{z}}$ includes $\mu_N \otimes \mathcal{D}_F\inv \cong \frac{1}{N}\mathcal{O}_F/\mathcal{O}_F$ into $(\mathcal{O}_F \otimes \C)/L_{\underline{z}}$ in the natural way,
	\item $\lambda_{\underline{z}}$ is induced by the pairing sending $(w_1,w_2) \in \mathcal{O}_F \otimes_\Z \C$ to $\operatorname{Trace}_{F/\Q}\operatorname{Im}(w_1\overline{w_2})$, and
	\item we trivialize $\Omega^1_{A_{\underline{z}}}$ by identifying the tangent space of $A_{\underline{z}}$ with $\mathcal{O}_F \otimes_\Z \C \cong \prod_\sigma \C$, and picking d$w \in \Omega^1_{A_{\underline{z}}}$ to be the $\C$-linear functional that sends $1$ in each component to $1$.
\end{enumerate}
Further, there is an action of $\Gamma = \Gamma_1(N)$ on $T^\times_{\underline{\omega},/Y(\C)}$ for which $\theta$ is equivariant. Thus we can pull back a complex Hilbert modular form, which is a function on $T_{\underline{\omega},/Y(\C)}^\times$, to a holomorphic function on $\h_F$. It is somewhat nontrivial to check that $\theta^*f$ satisfies the transformation property; we omit it here because we prove a more general fact in Theorem \ref{compareholothm}.

\subsection{Canonical Subgroups and Related Constructions}
Let $K$ be a $p$-adic field so we can base change to $S = \operatorname{Spec}\mathcal{O}_K$. We formally complete $X$ along $(p)$ to obtain a formal scheme $\mathfrak{X}$ with rigid analytic fiber $\mathcal{X}$. Inside of $\mathfrak{X}$, we have a formal open subscheme $\mathfrak{X}^{ord} \subset \mathfrak{X}$ which classifies \emph{ordinary} HBAVs. It is known as the ordinary locus, and its rigid analytic fiber is denoted $\mathcal{X}^{ord}$.

For each of the $g$ primes $\mathfrak{p}$ of $F$ lying over $p$, we have a Hodge height $\operatorname{Hdg}_\mathfrak{p}$ which measures how supersingular a HBAV is. It only depends on the substring $(A,\iota)$, with $\operatorname{Hdg}_\mathfrak{p}(A,\iota) \in \Q \cap [0,1]$. This Hodge height is the truncated valuation of the Hasse invariant, which is the determinant of verschiebung acting on $\underline{\omega}$. $A$ is ordinary at $p$ if and only if $\operatorname{Hdg}_\mathfrak{p}(A) = 0$ for all $\mathfrak{p}$ lying over $p$.

For any $\underline{v} = (v_1,\dots,v_g)$, we construct the $\underline{v}$-overconvergent locus $\mathfrak{X}(\underline{v})$, which is a normal formal scheme that classifies HBAVs $A$ with $\operatorname{Hdg}_{\mathfrak{p}_i}(A) < v_i$ for all $i$. This description gives a well-defined subset of the rigid analytic fiber $\mathcal{X}(\underline{v}) \subset \mathcal{X}$; we take a formal model $\mathfrak{X}^\prime(\underline{v})$ for it by taking admissible formal blowups as in \cite{AIP2} and normalize it in $\mathcal{X}(\underline{v})$ to obtain the formal overconvergent locus $\mathfrak{X}(\underline{v})$.

For each tuple $\underline{v}$ with $v_i < \frac{1}{p^n}$ for all $i$, we have a canonical subgroup $\mathfrak{C}_n \to \mathfrak{X}(\underline{v})$ of order $p^{nd}$. In fact, it is an $\mathcal{O}_F$-submodule of $\mathcal{A}[p^n]$. Its fiber over the ordinary locus is locally isomorphic to $\mu_{p^n} \otimes \mathcal{D}_F\inv$. Its fiber over the complement $\mathfrak{X}(\underline{v}) \setminus \mathfrak{X}^{ord}$ is isomorphic to a different formal group scheme, but both become isomorphic to each other and to the constant group scheme $\mathcal{O}_F/p^n\mathcal{O}_F$ at the rigid fiber.

We move to the rigid fiber $\mathcal{C}_n \to \mathcal{X}(\underline{v})$ in order to define $\mathcal{I}_n = \operatorname{Isom}_{\mathcal{X}(\underline{v})}(\mathcal{C}_n,\mu_{p^n} \otimes_\Z \mathcal{D}_F\inv)$. The map $h_n \colon \mathcal{I}_n \to \mathcal{X}(\underline{v})$ is a finite \'etale cover with Galois group $\T(\Z/p^n\Z)$. We then let $\mathfrak{I}_n$ be the formal model of $\mathcal{I}_n$ obtained by normalizing $\mathfrak{X}(\underline{v})$ inside $\mathcal{I}_n$, and we let $h_n$ also refer to the covering $\mathfrak{I}_n \to \mathfrak{X}(\underline{v})$. We call $\mathcal{I}_n$ and $\mathfrak{I}_n$ ``partial Igusa towers".

The canonical subgroups help us pick out an important subsheaf $\mathcal{F} \subset \underline{\omega}$, which is described in \cite[Proposition 3.4]{AIP2}.

\begin{prop}
There is a unique subsheaf $\mathcal{F}$ of $\underline{\omega}$ which is locally free of rank $1$ as a $\mathcal{O}_F \otimes_\Z \mathcal{O}_{\mathfrak{I}_n}$-module and contains $p^{\frac{\sup_i\{v_i\}}{p-1}}\underline{\omega}$. Moreover, for all $0 < w < n - \sup_i\{v_i\}\frac{p^n}{p-1}$, we have $\mathcal{O}_F$-linear maps
\begin{equation}
	HT_w \colon \mathfrak{C}_n^D \to h_n^*(\mathcal{F})/p^wh_n^*(\mathcal{F}).
\end{equation}
These induce isomorphisms
\begin{equation}
	HT_w \otimes 1 \colon \mathfrak{C}_n^D \otimes (\mathcal{O}_{\mathfrak{I}_n}/p^w\mathcal{O}_{\mathfrak{I}_n}) \to h_n^*(\mathcal{F})/p^wh_n^*(\mathcal{F}).
\end{equation}
\end{prop}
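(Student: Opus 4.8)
The plan is to construct $\mathcal{F}$ as the subsheaf of $\underline{\omega}$ cut out by the image of the Hodge--Tate map attached to the canonical subgroup, and then to deduce its local freeness, the stated containment, the isomorphism range, and uniqueness all from the quantitative comparison estimates furnished by the theory of canonical subgroups (as developed by Abbes--Mokrane and Fargues and adapted with the $\mathcal{O}_F$-structure in \cite{AIP2}).

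First I would recall the Hodge--Tate map for the finite flat group scheme $\mathfrak{C}_n$. A point of the Cartier dual $\mathfrak{C}_n^D$ is a homomorphism $\phi \colon \mathfrak{C}_n \to \mu_{p^n}$; pulling back the invariant differential $dt/t$ produces an $\mathcal{O}_F$-linear map $\mathfrak{C}_n^D \to \omega_{\mathfrak{C}_n}$, and the defining property of the canonical subgroup identifies $\omega_{\mathfrak{C}_n}$ with $\underline{\omega}$ modulo an explicit power of $p$, so that the composite lands in $\underline{\omega}/p^w\underline{\omega}$. Over the partial Igusa tower $\mathfrak{I}_n$ the group $\mathcal{C}_n$ is trivialized by the universal isomorphism defining $\mathcal{I}_n$, so $\mathfrak{C}_n^D$ becomes the free rank-one $\mathcal{O}_F$-module $\mathcal{O}_F/p^n\mathcal{O}_F$, and after twisting by $\mathcal{O}_{\mathfrak{I}_n}$ the map extends $\mathcal{O}_F \otimes_\Z \mathcal{O}_{\mathfrak{I}_n}$-linearly to $HT_w \otimes 1$.

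I would then \emph{define} $\mathcal{F}$ to be the $\mathcal{O}_F \otimes_\Z \mathcal{O}_{\mathfrak{I}_n}$-submodule of $\underline{\omega}$ generated by a lift of the image of $HT \otimes 1$; since the source is generated by a single element over $\mathcal{O}_F$, the sheaf $\mathcal{F}$ has rank at most one. Establishing that $\mathcal{F}$ is locally free of rank exactly one, the containment $p^{\frac{\sup_i\{v_i\}}{p-1}}\underline{\omega} \subset \mathcal{F}$, and the precise range $0 < w < n - \sup_i\{v_i\}\frac{p^n}{p-1}$ on which $HT_w \otimes 1$ is an isomorphism, all reduce to one analytic input: the discrepancy between the image of the Hodge--Tate map and all of $\underline{\omega}$ is bounded by the Hodge height, which on $\mathfrak{X}(\underline{v})$ is $< v_i$ at each $\mathfrak{p}_i$, producing exactly the exponent $\frac{\sup_i\{v_i\}}{p-1}$ and the cutoff above. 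Since the overconvergent locus lies inside the Rapoport locus, $\underline{\omega}$ is locally free of rank one over $\mathcal{O}_F \otimes_\Z \mathcal{O}_{\mathfrak{I}_n}$ there, so sandwiching $p^{\frac{\sup_i\{v_i\}}{p-1}}\underline{\omega} \subset \mathcal{F} \subset \underline{\omega}$ between two such modules forces $\mathcal{F}$ itself to be locally free of rank one.

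For uniqueness, I would argue that any rank-one locally free $\mathcal{O}_F \otimes_\Z \mathcal{O}_{\mathfrak{I}_n}$-submodule of $\underline{\omega}$ containing $p^{\frac{\sup_i\{v_i\}}{p-1}}\underline{\omega}$ is determined by its reduction modulo $p^w$ for $w$ in the admissible range, and that this reduction is pinned down by the canonical image of $HT_w$; hence it coincides with $\mathcal{F}$. The hard part will be the integral estimate of the previous paragraph: controlling the Hodge--Tate map modulo $p^w$ and isolating the exact power $p^{\frac{\sup_i\{v_i\}}{p-1}}$. This is the genuinely deep ingredient, and rather than reprove it from scratch the cleanest route is to deduce the full statement from the construction of \cite{AIP2}, invoking their adaptation of the canonical-subgroup theory to the Hilbert setting.
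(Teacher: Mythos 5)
Your proposal takes essentially the same route as the paper: the paper offers no independent argument for this proposition, deferring entirely to \cite[Proposition 3.4]{AIP2} and \cite[Proposition 4.3.1]{AIP}, and your sketch is an accurate outline of precisely that cited construction (define $\mathcal{F}$ from the Hodge--Tate image of $\mathfrak{C}_n^D$ over the partial Igusa tower, bound the cokernel by the Hodge height via canonical-subgroup theory to get the containment $p^{\frac{\sup_i\{v_i\}}{p-1}}\underline{\omega} \subset \mathcal{F}$ and the range of $w$, then sandwich to get local freeness and uniqueness), ending with the same deferral to \cite{AIP2} for the quantitative input. Nothing further is required.
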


We refer to \cite[Proposition 3.4]{AIP2}, and \cite[Proposition 4.3.1]{AIP}, for a proof.

\begin{rem}
The fact that $\underline{\omega} \subset \mathcal{F} \subset p^{\frac{\sup_i\{v_i\}}{p-1}}\underline{\omega}$ shows that $\mathcal{F} \otimes_{\mathcal{O}_K} K = \underline{\omega} \otimes_{\mathcal{O}_K} K$ as families of modules. Their rigid fibers give different integral structures to the rigid analytification of $\underline{\omega}$, and only $\mathcal{F}$ gives a locally free sheaf of $\mathcal{O}_F$-modules. Specifically, the rigid fiber of $\underline{\omega}$ and the rigid fiber of $\mathcal{F}$ should be thought of as two versions of ``the ball of radius $1$" inside the analytification of $\underline{\omega}$ pulled back to $\mathcal{I}_n$.
\end{rem}

\subsection{Overconvergent Hilbert Modular Forms} \label{OverconvergentSection}
We fix a tuple $\underline{v}$ such that $v_i < \frac{1}{p^n}$ for all $i$. We have our canonical subgroup of level $p^n$, $\mathfrak{C}_n \to \mathfrak{X}(\underline{v})$, and a partial Igusa tower $\mathfrak{I}_n$.

We write $\operatorname{ev}_1^D \colon \mathcal{I}_n \to \mathcal{C}_n^D$ for the map sending $u$ to $u^D(1)$. This descends to a map of formal schemes $\operatorname{ev}_1^D \colon \mathfrak{I}_n \to \mathfrak{C}_n^D$ by the universal property of the relative normalization. Following \cite{AIP2}, we define the formal affine morphism $\gamma_w \colon \mathfrak{T}_{\mathcal{F},w}^\times \to \mathfrak{I}_n$, for any $n-1<w\leq n-\operatorname{sup}\{v_i\}\frac{p^n}{p-1}$, as follows. For every normal $p$-adically complete and separated, flat $\mathcal{O}_K$-algebra $R$ and for every morphism $\gamma \colon \operatorname{Spf}(R) \to \mathfrak{I}_n$, its $R$-valued points over $\gamma$ classify frames $\alpha \colon \gamma^*(\mathcal{F}) \to \mathcal{O}_F \otimes_\Z R$, such that $HT_w(\operatorname{ev}_1^D(u)) \equiv \alpha\inv(1) \pmod{p^w}$. The reference cited above writes this as needing to send $1$ to $1$ in the composite
\begin{equation}
	\mathcal{O}_F/p^n\mathcal{O}_F \xrightarrow{u^D} \mathfrak{C}_n^D(R) \xrightarrow{HT_w}\gamma^*\mathcal{F}/p^w\gamma^*(\mathcal{F}) \xrightarrow{\alpha} \mathcal{O}_F \otimes_\Z R/p^wR.
\end{equation}
The map $\gamma_w \colon \mathfrak{T}_{\mathcal{F},w}^\times \to \mathfrak{I}_n$ is a formal torsor for the group $\T_w^0$. Composing with the projection $\mathfrak{I}_n \to \mathfrak{X}(\underline{v})$ gives a formal torsor $\pr_w \colon \mathfrak{T}_{\mathcal{F},w}^\times \to \mathfrak{X}(\underline{v})$ for the group $\T_w$. The rigid fiber is denoted $\mathcal{T}_{\mathcal{F},w}^\times$.

\begin{rem}
In \cite{AIP2}, this is called $\mathfrak{IW}_w^+$. In other references, such as \cite{AIP,L1}, there are two related formal torsors: one is called $\mathfrak{IW}_w^+$, while the other is $\mathfrak{T}_{\mathcal{F},w}^\times$. In the case of Hilbert modular forms, the two coincide \--- we use the second name here so that our discussion in Section \ref{NOHMFs} better parallels that of \cite[Section 3.4]{L1}.
\end{rem}

For any representation $(\kappa,W_\kappa)$ of $\T_w$, we may construct a corresponding sheaf $\underline{\omega}_{\kappa,w}$ on $\mathcal{X}(\underline{v})$. Over an affinoid open $U = \operatorname{Sp}R \subset \mathcal{X}(\underline{v})$ when $\underline{\omega}|_{U}$ is trivial, sections of $\underline{\omega}_{\kappa,w}$ correspond to functions
\begin{equation}
	H^0(\mathcal{X}(\underline{v}),\underline{\omega}_\kappa) = \left\{ f \colon \mathcal{IW}_w^+(U) \to W_\kappa \otimes R \,\mid\, f(A,\alpha g) = g\inv\cdot f(A,\alpha) \right\}.
\end{equation}

\begin{defn}
Let $\kappa$ be a $w$-analytic weight. A $\underline{v}$-overconvergent Hilbert modular form of level $\Gamma_1(N)$ and weight $\kappa$ is a global section of $\underline{\omega}_{\kappa}$. The $K$-vector space of $\underline{v}$-overconvergent Hilbert modular forms of level $\Gamma_1(N)$ and weight $\kappa$ is thus $H^0(\mathcal{X}(\underline{v}),\underline{\omega}_{\kappa,w})$.

An overconvergent Hilbert modular form is a $\underline{v}$-overconvergent Hilbert modular form for some tuple $\underline{v}$ with each $v_i > 0$. The $K$-vector space of overconvergent Hilbert modular forms is the union $\bigcup_{\underline{v}}H^0(\mathcal{X}(\underline{v}),\underline{\omega}_{\kappa,w})$.
\end{defn}

\section{Nearly Hilbert Modular Forms}

\subsection{The de Rham Sheaf} Consider the relative de Rham cohomology $H^1_{dR}(\mathcal{A}/X) = R^1\pi_*\Omega^\bullet_{\mathcal{A}/X}$ of $\mathcal{A} \to X$. This sheaf has a natural subsheaf $\mathcal{H}$ which is locally free of rank $2$ as a sheaf of $\mathcal{O}_F \otimes_\Z \mathcal{O}_X$-modules;\footnote{In fact, $H^1_{dR}(\mathcal{A}/X)$ agrees with $\mathcal{H}$ over $Y$. This $\mathcal{H}$ is a canonical extension of $H^1_{dR}(\mathcal{A}/Y)$, a locally free sheaf of $\mathcal{O}_F \otimes_\Z \mathcal{O}_F$-modules of rank $2$, to such a sheaf defined over all of $X$.} over $K$, it fits into an exact sequence of sheaves of $\mathcal{O}_F \otimes_\Z \mathcal{O}_X$-modules known as the Hodge filtration:
\begin{equation}
	0 \to \underline{\omega} \to \mathcal{H} \to \underline{\omega}^\vee \to 0.
\end{equation}
Over $K$, $\underline{\omega}$ and $\underline{\omega}^\vee$ are locally free of rank $1$ as sheaves of $\mathcal{O}_F\otimes_\Z\mathcal{O}_{X}$-modules, so they are projective. Thus the sequence splits, though non-canonically. In addition, $\mathcal{H}$ admits a nondegenerate alternating pairing with respect to which $\underline{\omega}$ is a maximal totally isotropic subspace, which gives the explicit isomorphism between $\mathcal{H}/\underline{\omega}$ and $\underline{\omega}^\vee$.

Finally, we note that $\mathcal{H}$ splits as a direct sum $\mathcal{H} = \bigoplus_\sigma \mathcal{H}_\sigma$ as $\underline{\omega}$ did. This gives a similar exact sequence for each embedding $\sigma$:
\begin{equation}
	0 \to \underline{\omega}_\sigma \to \mathcal{H}_\sigma \to \underline{\omega}_\sigma^\vee \to 0.
\end{equation}

This sequence gives a filtration $\underline{\omega}_\sigma \subset \mathcal{H}_\sigma$ of $\mathcal{H}_\sigma$, which induces filtrations on $\operatorname{Sym}^k\mathcal{H}_\sigma$ for each $k$, with $\operatorname{Fil}^r\operatorname{Sym}^k\mathcal{H}_\sigma = \underline{\omega}_\sigma^{k-r} \otimes \operatorname{Sym}^r\mathcal{H}_\sigma$. We also get a filtration on $\operatorname{Sym}^{\kappa}\mathcal{H} \coloneqq \bigotimes_\sigma \operatorname{Sym}^{k_\sigma}\mathcal{H}_\sigma$ indexed by the partially ordered set $\Z_{\geq 0}[I]$, where a pure tensor $\bigotimes_\sigma s_\sigma$ is in $\operatorname{Fil}^\nu\operatorname{Sym}^{\kappa}\mathcal{H}$ if $s_\sigma \in \operatorname{Fil}^{\nu_\sigma}\operatorname{Sym}^{k_\sigma}\mathcal{H}_\sigma$ for all $\sigma$. In particular, $\operatorname{Fil}^0\operatorname{Sym}^{\kappa}\mathcal{H} = \underline{\omega}_\kappa$, where $\nu = 0$ means $\nu = (0)_\sigma$.

We define two $X$-schemes as follows. Consider the sheaf of modules $[\mathcal{O}_F \otimes_\Z \mathcal{O}_X]^{\oplus 2}$, with a filtration given by making the first copy of $\mathcal{O}_F \otimes \mathcal{O}_X$ serve as a submodule. Then let $T^{\times}_\mathcal{H} = \operatorname{Isom}_{X,F}([\mathcal{O}_F \otimes_\Z \mathcal{O}_X]^{\oplus 2},\mathcal{H})$ be the honest $\mathcal{O}_F$-frame bundle of $\mathcal{H}$, and $T^{\times,+}_\mathcal{H} = \operatorname{Isom}_{X,F}^+([\mathcal{O}_F \otimes_\Z \mathcal{O}_X]^{\oplus 2},\mathcal{H})$ be the filtered $\mathcal{O}_F$-frame bundle for $\mathcal{H}$, where the superscript $+$ indicates that the isomorphisms must respect the filtration. The structure map $\pi_\mathcal{H} \colon T^\times_\mathcal{H} \to X$ is a torsor for $G = \operatorname{Aut}([\mathcal{O}_F \otimes_\Z \mathcal{O}_X]^{\oplus 2}) = \operatorname{Res}_{\mathcal{O}_F/\Z}\GL_2$, while $\pi_\mathcal{H}^+ \colon T^{\times,+}_\mathcal{H} \to X$ is a torsor for the group $\operatorname{Aut}^+([\mathcal{O}_F \otimes_\Z \mathcal{O}_X]^{\oplus 2})$ of automorphisms that respect the filtration, which is the Borel subgroup $Q$ consisting of upper triangular matrices.

There are maps $T^{\times,+}_\mathcal{H} \to T^\times_\mathcal{H}$, as any frame of $\mathcal{H}$ that respects the filtration is a frame of $\mathcal{H}$; and $T^{\times,+}_\mathcal{H} \to T^\times_{\underline{\omega}}$, as the isomorphism restricted to the first step of the filtration is exactly a frame for $\underline{\omega}$.

We have a functor $\mathcal{E}$ from the category of algebraic representations of $Q$ to the category of coherent sheaves on $X$. Given a representation $V$, the sections of the associated sheaf $\mathcal{E}(V)$ are homogeneous functions to $V$. We can write this explicitly over an affine open $U = \operatorname{Spec}R \subset X$ when the restriction $\mathcal{H}|_U$ is trivial.
\begin{equation}
	H^0(U,\mathcal{E}(V)) = \left\{ f \colon \pi_{\mathcal{H}}^{+,-1}(U) \to V \otimes R \,\mid\, f(\alpha g) = g\inv \cdot f(\alpha) \right\}.
\end{equation}

\begin{rem}
The associated bundle is often written as a contracted product $\mathcal{E}(V) = T^{\times,+}_\mathcal{H} \times_Q V$. If $V$ is the restriction of a given representation of $G$, we may also form $T^\times_{\mathcal{H}} \times_G V$, which will agree with $\mathcal{E}(V)$. This is a general construction, which applies to a torsor for any group and any algebraic representation of that group.
\end{rem}

If $V$ is the standard representation of $\GL_2$, we can form the $G$-module $\operatorname{Sym}^\kappa V = \bigotimes_\sigma \operatorname{Sym}^{k_\sigma}V$ over $K$, where $g \in G(R)$ acts on a pure tensor $\bigotimes_\sigma v_\sigma$ by sending it to $\bigotimes_\sigma \sigma(g) \cdot v_\sigma$. Then $\mathcal{E}\left(\operatorname{Sym}^\kappa V\right) = \operatorname{Sym}^\kappa \mathcal{H}$. When the $G$-representation $\operatorname{Sym}^\kappa V$ is restricted to $Q$, is acquires a filtration for which $\operatorname{Fil}^\nu\operatorname{Sym}^\kappa V$ is stabilized by $Q$, and $\mathcal{E}\left(\operatorname{Fil}^\nu\operatorname{Sym}^\kappa V\right) = \operatorname{Fil}^\nu \operatorname{Sym}^\kappa \mathcal{H}$.

\begin{rem}\label{GversusBforWeights}
Since $\operatorname{Sym}^\kappa V$ is a $G$-module, we could have described $\operatorname{Sym}^\kappa \mathcal{H}$ as $T^\times_\mathcal{H} \times_G \operatorname{Sym}^\kappa V$ instead of using $T^{\times,+}_\mathcal{H}$. However, the same cannot be said for e.g. $\underline{\omega}_\kappa = \operatorname{Fil}^0\operatorname{Sym}^\kappa \mathcal{H}$. This is an important distinction. The $Q$-torsor $T^{\times,+}_\mathcal{H}$ is useful for describing weights \emph{and} types of nearly holomorphic Hilbert modular forms. However, the Gauss--Manin connection does {not} respect the filtration. While it does give a principal connection on $T^\times_\mathcal{H}$, and on any associated bundles thereof, it does not give one on $T^{\times,+}_\mathcal{H}$. This tension is why the theory of nearly holomorphic automorphic forms exists.
\end{rem}

\subsection{The Gauss--Manin Connection}
The relative de Rham cohomology admits a Gauss--Manin connection, which extends to a connection on $\mathcal{H}$ with logarithm poles on the boundary $C$. We denote it by $\nabla \colon \mathcal{H} \to \mathcal{H} \otimes \Omega^1_X(\log C)$. It induces a principal connection on $T^\times_\mathcal{H}$, and a connection on any associated bundle $T^\times_\mathcal{H} \times_G V$ for any representation $V$ of $G$.

We can describe it in terms of the action of the Lie algebra. Specifically, we let $U = \operatorname{Spec}R \subset X$ be an affine open for which $\mathcal{H}|_U$ is trivial, and we pick $D$ a derivation $R \to R$, viewing it as an element of $T_X(U)$.\footnote{The exterior derivative $d \colon R \to \Omega^1_U$ gives rise to a directional derivative $d(D) \colon R \to R$ for any $D \in T_X(U)$. The directional derivative in the direction of $D$ is a derivation on $R$, and every derivation arises this way.} Then the covariant derivative in the direction of $D$ is a linear map $\nabla(D) \colon \mathcal{H} \to \mathcal{H}$, which also commutes with the action of $\mathcal{O}_F$. Any frame $\alpha \in T^\times_\mathcal{H}$ gives a natural basis for the fibers, and $\nabla(D)$ is given in this basis by some matrix $X(D,\alpha) \in M_{2}(\mathcal{O}_F \otimes_\Z R)$ (if $U$ meets the boundary, it will actually be in $M_{2}(\mathcal{O}_F \otimes_\Z \operatorname{Frac}R)$ with logarithm poles over $C$). This is $\mathfrak{g}(R)$, where $\mathfrak{g}$ is the Lie algebra of $G$.

The map $\alpha \mapsto X(D,\alpha)$ is $G$-equivariant, in the sense that $X(D,\alpha g) = \operatorname{Ad}(g\inv)(X(D,\alpha))$ for all $g \in G$. This is the standard condition for $\nabla$ to induce a principal connection on $T^\times_\mathcal{H}$. One can see this formula at the level of linear algebra by using $g$ as a change of basis matrix, and noting that the $\operatorname{Ad}$ action of $G$ on $\mathfrak{g}$ is given by conjugation.

Let $V$ be a representation of $G$, so that it is also a representation of $\mathfrak{g}$. Then the associated bundle $T^\times_\mathcal{H} \times_G V$ acquires a connection whose covariant derivative $\nabla_V$ can be described as
\begin{equation}\label{CovariantDerivative}
	\nabla_V(f)(\alpha) = Df(\alpha) + X(D,\alpha) \cdot f(\alpha).
\end{equation}
The action on pure tensors $f \otimes r \in V \otimes R$ is $D(f \otimes r) = f \otimes D(r)$ and $X(D,\alpha) \cdot (f \otimes r) = (X(D,\alpha) \cdot f) \otimes r$.
\begin{rem}
One might think of functions to $V \otimes R$ as $R$-linear combinations of vectors in $V$. The $Df(\alpha)$ term in Equation \eqref{CovariantDerivative} differentiates the function $r \in R$ using the exterior derivative, while the $X(D,\alpha) \cdot f(\alpha)$ term differentiates the sections of $V$. This shows that it is a natural form for a connection that might be easier to describe in other ways.
\end{rem}

The Gauss--Manin connection is also used to define the Kodaira--Spencer map. Consider the following composition.
\begin{equation}\label{KS}
\underline{\omega} \otimes \underline{\omega} \xrightarrow{\Id \otimes \iota} \underline{\omega} \otimes \mathcal{H} \xrightarrow{\Id \otimes \nabla} \underline{\omega} \otimes \mathcal{H} \otimes \Omega^1_X(\log C) \xrightarrow{\Id \otimes \operatorname{pr} \otimes \Id} \underline{\omega} \otimes \underline{\omega}^\vee \otimes \Omega^1_X(\log C) \to \Omega^1_X(\log C).
\end{equation}
This morphism is surjective, and over $K$ it identifies $\Omega^1_X(\log C)$ with the summand $\bigoplus_\sigma \underline{\omega}_\sigma^{\otimes 2}$ of $\underline{\omega}^{\otimes 2}$. We write this isomorphism as $\operatorname{KS} \colon \Omega^1_X(\log C) \xrightarrow\sim \bigoplus_\sigma \underline{\omega}_\sigma^{\otimes 2}$. We will also use the projection $\operatorname{KS}_\sigma \colon \Omega^1_X(\log C) \to \underline{\omega}_\sigma^{\otimes 2}$.


\subsection{$(\mathfrak{g},Q)$-Modules}

As noted in Remark \ref{GversusBforWeights}, the Gauss--Manin connection does not respect the filtration of $\mathcal{H}$. This presents a problem, since the filtration is critical to describing the type, not just the weight, of a nearly holomorphic Hilbert modular form. Fortunately, there is a workaround, which we take from \cite[Section 2.2]{L1}.\footnote{The ideas seem to have existed before Liu's paper. Liu does not claim to be the original source, pointing instead to \cite[Section VI.4]{FaltingsChai}. However, Liu's paper is the only source the author knows of which defines $(\mathfrak{g},Q)$-modules explicitly for use in the context of automorphic forms, so we cite it here.}

Let $\mathfrak{g} = \operatorname{Lie}G$ and $\mathfrak{q} = \operatorname{Lie}Q$ be the Lie algebras of $G$ and of its Borel subgroup $Q$, respectively.
\begin{defn}
A $(\mathfrak{g},Q)$-module defined over an algebra $E$ is an algebraic representation $V$ of both $\mathfrak{g}$ and $Q$ on locally free $E$-modules, satisfying the following compatibility conditions.
\begin{enumerate}
	\item The action of $\mathfrak{q} \subset \mathfrak{g}$ is the same as that induced by $Q$.
	\item For any $g \in Q$, $X \in \mathfrak{g}$, and $v \in V$,
	\begin{equation}
		\operatorname{Ad}(g)(X) \cdot v = g \cdot X \cdot g\inv \cdot v.
	\end{equation}
\end{enumerate}
\end{defn}

Any representation $V$ of $G$ has a natural action of $\mathfrak{g}$ and of $Q$. These actions will be compatible in both senses above. In fact, any finite rank $(\mathfrak{g},Q)$-module arises this way, c.f. \cite[Remark 2.7]{L1}.

In order to produce a $(\mathfrak{g},Q)$-module, one can start with a representation $V$ of $Q$ and form a basis of $\mathfrak{g}$ whose first elements are a basis of $\mathfrak{q} \subset \mathfrak{g}$. The elements of $\mathfrak{q}$ act in the way induced by $Q$, so one only needs to give formulas for the action of the rest in a compatible way.

\begin{rem} \label{gQmodulesConceptual}
Another way to describe a $(\mathfrak{g},Q)$-module $V$ is to pick some ring $R$ and extend the $Q(R)$-module $V(R)$ to a representation $\widehat{V}(R)$ of an open subgroup $\widehat{Q}(R) \subset G(R)$ containing $Q(R)$. Since $\widehat{Q}(R)$ is open in $G(R)$, its Lie algebra is $\mathfrak{g}$. One then checks that the action of $\mathfrak{g}$ preserves $V(R)$ as a subspace of $\widehat{V}(R)$, and then calculates the formulas for the action, which will automatically be compatible. These formulas will only involve scalars from $R$, so they will define a $(\mathfrak{g},Q)$-module over $R$. If the scalars all come from a subring of $R$ and the basis is defined over that subring, then the formulas will define a $(\mathfrak{g},Q)$-module over that subring. This is done e.g. in \cite[Section 2.3]{L1} (``a more conceptual proof" after Equation (2.5)) for $R = \Z_p$; we will use this method in Section \ref{NEARLY} as well.
\end{rem}

Let $V$ be a $(\mathfrak{g},Q)$-module. The formula in Equation \eqref{CovariantDerivative} shows that the Gauss--Manin connection on the bundle associated to a $G$-module is given in terms of the exterior derivative and the action of $\mathfrak{g}$. Thus one might hope that we can define a connection on $\mathcal{V} \coloneqq T^{\times,+}_\mathcal{H} \times_Q V$. This is most of the content of the following proposition.

\begin{prop}\label{ConnectionOnAssociatedBundleWithgAction}
Let $V$ be a $(\mathfrak{g},Q)$-module. Then there is an integrable connection $\nabla_\mathcal{V}$ on the associated bundle $\mathcal{V} \coloneqq T^{\times,+}_\mathcal{H} \times_Q V$ given by the formula
\begin{equation}
	\nabla_\mathcal{V}(D)(v)(\alpha) = Dv(\alpha) + X(D,\alpha) \cdot v(\alpha).
\end{equation}
\end{prop}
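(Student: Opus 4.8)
The plan is to check three things in turn: that the formula carries a section of $\mathcal V$ to a genuine $\mathcal V\otimes\Omega^1_X(\log C)$-valued section, that the resulting operator obeys the Leibniz rule (so is a connection), and that it is integrable. The crucial structural remark is that the formula refers only to the exterior derivative and to the action of $X(D,\alpha)\in\mathfrak g$ on $V$; the latter is defined precisely because $V$ is a $\mathfrak g$-module and not merely a $Q$-module, which is the whole reason the $(\mathfrak g,Q)$-structure is imposed and the reason we may hope to differentiate sections of a bundle associated to the $Q$-torsor $T^{\times,+}_\mathcal H$ despite the Gauss--Manin connection living on the $G$-torsor $T^\times_\mathcal H$.

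First I would establish well-definedness. A section $v$ of $\mathcal V$ is a $Q$-equivariant function on $T^{\times,+}_\mathcal H$, and I must verify that $\alpha\mapsto Dv(\alpha)+X(D,\alpha)\cdot v(\alpha)$ is again $Q$-equivariant, i.e. that its value at $\alpha g$ is $g\inv$ times its value at $\alpha$ for $g\in Q$. Expanding both summands at $\alpha g$, using $v(\alpha g)=g\inv\cdot v(\alpha)$ together with the change-of-frame behavior of the Gauss--Manin connection matrix, the $\operatorname{Ad}$-conjugation that arises from commuting $X(D,\alpha)$ past $g$ is absorbed by compatibility condition (2) in the definition of a $(\mathfrak g,Q)$-module, namely $\operatorname{Ad}(g)(X)\cdot w=g\cdot X\cdot g\inv\cdot w$. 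The leftover inhomogeneous terms — one from the change-of-frame behavior of the connection matrix, the other from differentiating $g\inv\cdot v(\alpha)$ along the base — involve only $\mathfrak q=\operatorname{Lie}Q$, and condition (1), that $\mathfrak q$ acts as the differential of the $Q$-action, is exactly what makes them cancel against each other. This is the main obstacle: because the Gauss--Manin connection does not preserve the Hodge filtration (Remark \ref{GversusBforWeights}), the matrix $X(D,\alpha)$ genuinely has components outside $\mathfrak q$, so the horizontal directions point out of the $Q$-torsor, and it is only the interplay of the two module axioms that rescues equivariance.

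Granting this, the connection axioms are routine: $\mathcal O_X$-linearity in $D$ and the Leibniz identity $\nabla_\mathcal V(D)(rv)=D(r)\,v+r\,\nabla_\mathcal V(D)(v)$ for $r\in\mathcal O_X$ follow at once, since $D$ is a derivation and $v\mapsto X(D,\alpha)\cdot v$ is $\mathcal O_X$-linear. This is the same bookkeeping as for the genuine $G$-module case recorded in \eqref{CovariantDerivative}.

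Finally I would prove integrability by computing the curvature on a section, $\big[\nabla_\mathcal V(D_1),\nabla_\mathcal V(D_2)\big]v-\nabla_\mathcal V([D_1,D_2])v$. The pure derivative terms cancel, and what remains is the image under the Lie-algebra action $\mathfrak g\to\operatorname{End}(V)$ of the quantity $D_1X(D_2,\alpha)-D_2X(D_1,\alpha)+[X(D_1,\alpha),X(D_2,\alpha)]-X([D_1,D_2],\alpha)$. This is the curvature of the Gauss--Manin connection on $\mathcal H$ expressed in the frame $\alpha$, which vanishes because $\nabla$ is integrable; since the action map is a homomorphism of Lie algebras it sends brackets to commutators and hence sends this vanishing curvature to $0$. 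As this argument uses only the $\mathfrak g$-action, it is formally identical to the $G$-module case and requires no further input from the $Q$-structure.
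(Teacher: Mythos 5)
Your proof is correct, and it follows the same skeleton as the paper's own proof --- verify that the formula defines a $Q$-homogeneous function on $T^{\times,+}_\mathcal{H}$, then inherit integrability from $d$ and the Gauss--Manin connection --- but the execution differs in a way worth recording. The paper checks homogeneity term by term for a \emph{fixed} group element: it asserts $Dv(\alpha g) = D(g\inv \cdot v(\alpha)) = g\inv \cdot Dv(\alpha)$ and $X(D,\alpha g) = \operatorname{Ad}(g\inv)(X(D,\alpha))$, so that each summand is separately equivariant, and in doing so it only ever invokes axiom (2) of a $(\mathfrak{g},Q)$-module. Both of those identities hold only when $Dg = 0$; but a change of frame over $U = \operatorname{Spec}R$ is a morphism $g \in Q(R)$ varying over the base, and then neither summand is equivariant on its own: differentiating $g\inv \cdot v(\alpha)$ produces a term involving the differential of the $Q$-action, while the connection matrix acquires the Maurer--Cartan term $g\inv Dg \in \mathfrak{q}(R)$. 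Your proof keeps both inhomogeneous terms and cancels them against each other using axiom (1), which is exactly the correct general verification --- and it explains why axiom (1) is part of the definition at all, a point on which the paper's proof is silent (as written, that proof would go through for any pair of actions satisfying only axiom (2), yet without axiom (1) the formula genuinely fails to glue across frame changes). Your explicit curvature computation, showing that $[\nabla_\mathcal{V}(D_1),\nabla_\mathcal{V}(D_2)] - \nabla_\mathcal{V}([D_1,D_2])$ acts through the representation by the $\mathfrak{g}$-valued curvature of the Gauss--Manin connection in the frame $\alpha$, likewise fleshes out what the paper compresses into the single sentence that the connection is integrable ``because $d$ and $\nabla$ both are,'' and your remark on the Leibniz rule covers a step the paper leaves tacit. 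In short: same route, but your version is the complete one; what the paper's buys is brevity, and what yours buys is an accurate account of where each of the two $(\mathfrak{g},Q)$-axioms is actually needed.
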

\begin{proof}
The formula certainly gives a map $\mathcal{O}_{T^{\times,+}_\mathcal{H}} \to \mathcal{O}_{T^{\times,+}_\mathcal{H}} \otimes \Omega^1_X(\log C)$. In order to show that it gives a connection on $\mathcal{V}$, we have to show that the homogeneity property holds, $\nabla_\mathcal{V}(v)(\alpha g) = g\inv \cdot \nabla_\mathcal{V}(v)(\alpha)$. In fact, it holds for each term separately, and we treat them separately. First,
\begin{equation}
	Dv(\alpha g) = D(g\inv \cdot v(\alpha)) = g \inv \cdot Dv(\alpha).
\end{equation}
The first equality is by the homogeneity of $v$, and the second by the chain rule for the exterior derivative $d$. Then,
\begin{equation}
	X(D,\alpha g) \cdot v(\alpha g) = \operatorname{Ad}(g\inv)(X(D,\alpha)) \cdot g\inv \cdot v(\alpha) = g\inv \cdot X(D,\alpha) \cdot g \cdot g\inv \cdot v(\alpha).
\end{equation}
This reduces to $g\inv \cdot X(D,\alpha) \cdot v(\alpha)$. Here we have used the second compatibility condition for $(\mathfrak{g},Q)$-modules in the second equality. Thus we have
\begin{equation}
\begin{array}{c c l}
	\nabla_\mathcal{V}(D)(v)(\alpha g) &=& Dv(\alpha g) + X(D,\alpha g) \cdot v(\alpha g) \\ &=& g\inv \cdot Dv(\alpha) + g\inv \cdot X(D,\alpha) \cdot v(\alpha) \\ &=& g\inv \cdot \nabla_\mathcal{V}(v)(\alpha).
\end{array}
\end{equation}
This is what we wanted to show. The connection is integrable because $d$ and $\nabla$ both are.
\end{proof}

\subsection{Nearly Holomorphic Hilbert Modular Forms} \label{NEARLY}
Nearly holomorphic Hilbert modular forms are sections of certain vector bundles. In particular, there should be finite rank representations $V_\kappa^\nu$ of $Q$ for all $\kappa \in \Z[I]$ and ${\nu} = (r_\sigma)_\sigma \in \Z_{\geq 0}[I]$ so that a nearly holomorphic Hilbert modular form of weight $\kappa$ and type ${\nu}$ is a section of $\mathcal{V}_\kappa^{\nu} = T^{\times,+}_\mathcal{H} \times_Q V_\kappa^{{\nu}}$. Each individual $V_\kappa^{{\nu}}$ is not a $(\mathfrak{g},Q)$-module, which complicates the definition of the differential operators. However, we will have inclusion $V_\kappa^{{\nu}_{1}} \subset V_\kappa^{{\nu}_{2}}$ whenever ${\nu_{i}} = ({r}_\sigma^{(i)})_\sigma$ and $r_\sigma^{(2)} \geq r_{\sigma}^{(1)}$ for all $\sigma$, and the union $V_\kappa = \bigcup_{\nu} V_\kappa^{\nu}$ is a $(\mathfrak{g},Q)$-module. It will satisfy $\mathfrak{g}_\sigma V_\kappa^{\nu} \subset V_\kappa^{{\nu} + \sigma}$. Thus, by Proposition \ref{ConnectionOnAssociatedBundleWithgAction} and the forthcoming Remark \ref{RaiseTypeBySigma}, we will obtain a connection $\nabla_{\mathcal{V}_\kappa}$ on $\mathcal{V}_\kappa = \bigcup_{\nu} \mathcal{V}_\kappa^{\nu}$ satisfying
\begin{equation}
\nabla_{\mathcal{V}_\kappa}(\mathcal{V}_\kappa^{\nu}) \subset \bigoplus_\sigma\mathcal{V}_\kappa^{{\nu}+\sigma} \otimes \underline{\omega}_\sigma^{\otimes 2} \subset \mathcal{V}_\kappa \otimes \Omega^1_X(\log C)
\end{equation}
The specifics are laid out in the rest of the section.


For now, we fix an auxilliary prime $\ell$ which is unramified in $F/\Q$, $L$ an $\ell$-adic field which splits $F$, and $\mathcal{O}_L$ its ring of integers. We let $I$ denote the set of embeddings of $F$ into $L$.

We fix a weight $\kappa = \sum_\sigma k_\sigma\sigma \in \Z[I]$, with $W_\kappa$ the corresponding $1$-dimensional representation of $\T$ defined over $\mathcal{O}_L$. We build the $(\mathfrak{g},Q)$-module $V_\kappa$ following the course laid out in Remark \ref{gQmodulesConceptual}.

We begin by giving names to important, non-algebraic subgroups of $G(\mathcal{O}_L)$. We will use the neighborhood $I_G(\mathcal{O}_L) \supset Q(\mathcal{O}_L)$, which contains the lower parabolic subgroup $Q_{I_G}^-(\mathcal{O}_L)$ and the Levi subgroup $H(\mathcal{O}_L)$,
\begin{equation}
\begin{array}{c}
	I_G(\mathcal{O}_L) = \left\{ \begin{pmatrix} a & b \\ c & d \end{pmatrix} \in G(\mathcal{O}_L) \mid c \in p\mathcal{O}_F \otimes \mathcal{O}_L \right\}, \\\\
	Q_{I_G}^-(\mathcal{O}_L) = \left\{ \begin{pmatrix} a & 0 \\ c & d \end{pmatrix} \in G(\mathcal{O}_L) \mid c \in p\mathcal{O}_F \otimes \mathcal{O}_L \right\}, \\\\
	H(\mathcal{O}_L) = \left\{ \begin{pmatrix} a & 0 \\ 0 & d \end{pmatrix} \in G(\mathcal{O}_L) \right\}.
\end{array}
\end{equation}
Note that $Q_{I_G}^-(\mathcal{O}_L)$ projects onto $\T(\mathcal{O}_L)$ by picking out the top left entry $a$. We inflate the representation $W_\kappa(L)$ from $\T(\mathcal{O}_L)$ to $Q_{I_G}^-(\mathcal{O}_L)$. We have a unique choice of $\ell$-adic topology on the finite dimensional Banach space $W_\kappa(L)$; consider the $\ell$-adic analytic induction $\widehat{V}_\kappa(\mathcal{O}_L) = \operatorname{Ind}_{Q_{I_G}^-(\mathcal{O}_L)}^{I_G(\mathcal{O}_L)}W_\kappa(L)$, which is the set of $\ell$-adic analytic functions $\phi \colon I_G(\mathcal{O}_L) \to W_\kappa(L)$ which are homogeneous with respect to the action of $Q_{I_G}^-(\mathcal{O}_L)$ in the sense that $\phi(hx) = h \cdot \phi(x)$ for all $x \in I_G(\mathcal{O}_L)$, $h \in Q_{I_G}^-(\mathcal{O}_L)$. It is a representation of $I_G(\mathcal{O}_L)$ by right translation: $(g \cdot \phi)(x) = \phi(xg)$ for all $g,x \in I_G(\mathcal{O}_L)$.

By the Iwahori decomposition, every coset $Q_{I_G}^-(\mathcal{O}_L)x \in Q_{I_G}^-(\mathcal{O}_L)\backslash I_G(\mathcal{O}_L)$ can be written as
\begin{equation}
	Q_{I_G}^-(\mathcal{O}_L)x = Q_{I_G}^-(\mathcal{O}_L)\begin{pmatrix}
		1 & \underline{Y} \\ 0 & 1
	\end{pmatrix}.
\end{equation}
This is for some unique choice of $\underline{Y} \in \mathcal{O}_F \otimes \mathcal{O}_L$, so that each $\phi \in \widehat{V}_\kappa$ is determined by its values on these matrices. We note that the coset spaces $Q_{I_G}^-(\mathcal{O}_L)\backslash I_G(\mathcal{O}_L)$ and $H(\mathcal{O}_L)\backslash Q(\mathcal{O}_L)$ are the same, down to the choice of representatives, and view $\widehat{V}_\kappa$ as an algebraic representation defined over $\mathcal{O}_L$ which consists of $\ell$-adic analytic functions from the coset space $H\backslash Q$ to $W_\kappa$. This allows us to write $\widehat{V}_\kappa(R) = W_\kappa \otimes \mathcal{O}_{H\backslash Q}^{rig}$, where $\mathcal{O}_{H \backslash Q}^{rig}$ is the space of $\ell$-adic analytic functions on the rigid space $H \backslash Q$.

For any $\mathcal{O}_L$-algebra $R$, we have natural coordinates on $(H\backslash Q)(R) \cong \mathcal{O}_F \otimes R \cong \prod_\sigma R$ called $Y_\sigma$ for each embedding $\sigma \colon F \to L$.\footnote{This is where we use the fact that our auxilliary prime $\ell$ is unramified.} By standard theory of rigid spaces, the ring of $\ell$-adic analytic functions on $H\backslash Q$ is $\mathcal{O}_{H \backslash Q}^{rig}(\operatorname{Sp}R) = R\langle \underline{Y} \rangle$, the space of power series over $R$ in the variables $Y_\sigma$ whose coefficients go to $0$ $\ell$-adically.

One may explicitly compute the action of $I_G(\mathcal{O}_L)$ on $\widehat{V}_\kappa(\mathcal{O}_L)$, obtaining the formulas
\begin{equation}
	(g \cdot P)(\underline{Y}) = (a+\underline{Y}c) \cdot P((a+\underline{Y}c)\inv(b+\underline{Y}d)), \qquad g = \begin{pmatrix}
		a & b \\ c & d \end{pmatrix}
\end{equation}
Here $\underline{Y}$ and the entries of $g$ are viewed as elements of the ring $\mathcal{O}_F \otimes \mathcal{O}_L$. The induced action of $Q(\mathcal{O}_L)$ and $\mathfrak{g}(\mathcal{O}_L)$ may be computed from this formula. For $Q(\mathcal{O}_L)$, this is simple: substitute $0$ for $c$ in the formula above. This gives the induced action of $\mathfrak{q}(\mathcal{O}_L)$ on $\widehat{V}_\kappa(\mathcal{O}_L)$. To describe the action of the rest of $\mathfrak{g}(\mathcal{O}_L)$, we need a basis. Since $\mathfrak{g}(\mathcal{O}_L) = \prod_\sigma \mathfrak{gl}_2(\mathcal{O}_L)$, we may specify the actions of $\{\mu_\sigma^- \mid \sigma \colon F \to L\}$, where $\mu_\sigma^-$ is the element of $\prod \mathfrak{gl}_2(\mathcal{O}_L)$ which is $\begin{pmatrix} 0 & 0 \\ 1 & 0 \end{pmatrix}$ in the entry corresponding to $\sigma$, and the zero matrix in all other entries. With our fixed choice of coordinates $Y_\sigma$, we find that
\begin{equation}
	\mu_\sigma^- \cdot P(\underline{Y}) = Y_\sigma \varepsilon_\sigma \cdot P(\underline{Y}) - Y_\sigma^2 \frac{\partial}{\partial Y_\sigma} P(\underline{Y}).
\end{equation}
Here $\varepsilon_\sigma \in \operatorname{Lie}(\T) = \prod \mathfrak{gl}_1(\mathcal{O}_L)$ is the tuple with a $1$ in the entry corresponding to $\sigma$ and $0$ in all other entries, acting naturally on $W_\kappa$. Specifically, $\varepsilon_\sigma \cdot w = k_\sigma w$ for any $w \in W_\kappa$.

Notice that the space of polynomials $W_\kappa[\underline{Y}]$ is preserved by the actions of both $Q(\mathcal{O}_L)$ and $\mathfrak{g}(\mathcal{O}_L)$, though it is not preserved by $I_G(\mathcal{O}_L)$. These are the algebraic functions on the scheme $H \backslash Q$. We have thus defined a $(\mathfrak{g},Q)$-module structure on $V_\kappa = W_\kappa \otimes \mathcal{O}_{H \backslash Q}$, where now $\mathcal{O}_{H \backslash Q}$ is the space of algebraic functions on the coset space $H \backslash Q$. Over $\mathcal{O}_L$, this is the polynomial ring $\mathcal{O}_{H \backslash Q}(R) = \mathcal{O}_L[\underline{Y}] = \mathcal{O}_L[Y_\sigma]_\sigma$.

Now we fix an algebra $R$ over the ring of integers in the Galois closure of $F$ such that the discriminant of $F$ is invertible in $R$. Over $R$, we may write $V_\kappa = W_\kappa \otimes \mathcal{O}_{H \backslash Q} = W_\kappa[\underline{Y}]$ where $\underline{Y} = (Y_\sigma)_\sigma$ is our natural set of coordinates, and we can extend any basis for $\mathfrak{q}$ to a basis for $\mathfrak{g}$ by using $\{\mu_\sigma^-\}$. We have actions given by formulas. For $Q$,
\begin{equation}
	(g \cdot P)(\underline{Y}) = (a \cdot P(a\inv(b+\underline{Y}d)), \qquad g = \begin{pmatrix}
		a & b \\ 0 & d \end{pmatrix}.
\end{equation}
For $\mathfrak{g}$, we specify the action of the $\mu_\sigma^-$.
\begin{equation}\label{actionofmusigmaminus}
	\mu_\sigma^- \cdot P(\underline{Y}) = Y_\sigma \varepsilon_\sigma \cdot P(\underline{Y}) - Y_\sigma^2 \frac{\partial}{\partial Y_\sigma} P(\underline{Y}).
\end{equation}
These formulas give compatible actions since they were compatible on $\widehat{V}_\kappa$.

\begin{rem}
If we replace $R$ by a subring $\mathcal{O} \subset R$, then $\mathfrak{g}(\mathcal{O})$ is a subalgebra of $\mathfrak{g}(R) = \prod_\sigma \mathfrak{gl}_2(R)$. Thus these formulas can still be used to describe the action, but we have to be careful about what elements of the direct product we consider. This is particularly important if $p$ ramifies in $F$ when $R = K$ is our chosen $p$-adic field and $\mathcal{O} = \mathcal{O}_K$ is its ring of integers.
\end{rem}

Not only does the action of $Q$ preserve the degree of the polynomial, it preserves the degree in each variable $Y^\sigma$ separately. Thus $V_\kappa$ has a filtration indexed by the partially ordered set $\Z_{\geq 0}[I]$, such that $V_\kappa^{\nu}$ is the space of polynomials of degree at most $r_\sigma$ in the variable $Y_\sigma$. The Lie algebra $\mathfrak{q}$ also preserves the filtration. We write $\mathfrak{g} = \bigoplus_\sigma \mathfrak{g}_\sigma$ over $K$. Since $\mathfrak{g}_\sigma$ is generated by $\mathfrak{q}_\sigma$ and $\mu_\sigma^-$, it preserves the degree as a function of $Y_\tau$ for each $\tau \neq \sigma$, and raises the degree as a function of $Y_\sigma$ by at most $1$ by the formula in Equation \eqref{actionofmusigmaminus}. We have $\mathfrak{g}_\sigma V_\kappa^{\nu} \subset V_\kappa^{\nu+\sigma}$.

\begin{defn}
The sheaf of nearly holomorphic Hilbert modular forms of weight $\kappa$ is $\mathcal{V}_\kappa = T^{\times,+}_\mathcal{H} \times_Q V_\kappa$. The sheaf of nearly holomorphic Hilbert modular forms of weight $\kappa$ and type ${\nu}$ is $\mathcal{V}_\kappa^{\nu} = T^{\times,+}_\mathcal{H} \times_Q V_\kappa^{\nu}$. The $K$-vector space of nearly holomorphic Hilbert modular forms of weight $\kappa$ and type $\nu$ is thus $H^0(X,\mathcal{V}_\kappa^\nu)$.
\end{defn}

\begin{rem} \label{RaiseTypeBySigma}
Using the filtration, we can refine the statement of Proposition \ref{ConnectionOnAssociatedBundleWithgAction}. Define the differential operator $\nabla_\sigma$ to be the composition
\begin{equation}
	\mathcal{V}_\kappa \xrightarrow{\nabla_{\mathcal{V}_\kappa}} \mathcal{V}_\kappa \otimes \Omega^1_X(\log C) \xrightarrow{1 \otimes KS_\sigma} \mathcal{V}_\kappa \otimes \underline{\omega}_\sigma^{\otimes 2} \cong \mathcal{V}_{\kappa + 2\sigma}.
\end{equation}
Then $\nabla_\sigma(D)$ is given by the action of $X(D_{\sigma},\alpha) \in \mathfrak{g}_\sigma$, where $D_{\sigma}$ is the projection of $D\in \Omega^1_X(\log C)$ onto the summand $\underline{\omega}_\sigma^{\otimes 2}$. Thus it sends $\mathcal{V}_\kappa^{\nu}$ into $\mathcal{V}_\kappa^{{\nu} + \sigma} \otimes \underline{\omega}_\sigma^{\otimes 2} \cong \mathcal{V}_{\kappa + 2\sigma}^{{\nu} + \sigma}$. Specifically, $\nabla_\sigma$ raises the weight of a nearly holomorphic Hilbert modular form by $2\sigma$ and its type by $\sigma$.

Note that we have written $\nabla_\sigma$ with no reference to $\kappa$ or ${\nu}$, since they are clear from context while $\sigma$ is not. In addition, we write $\kappa = \sum_\sigma k_\sigma\sigma$ in order to make better sense of what character $\kappa + 2\sigma$ corresponds to.
\end{rem}

It is known (c.f. \cite[Lemma 2.1.14]{Katz}) that $\nabla_\sigma$ and $\nabla_\tau$ commute for any pair of embeddings $\sigma$ and $\tau$. Thus we may unambiguously write $\nabla_{\kappa^\prime}$ for the differential operator that raises weights by $\kappa^\prime = \sum_\sigma 2k_\sigma^\prime\sigma$ and types $(k_\sigma)_\sigma$.

\subsection{Comparison to Classical Theory}\label{NEARLYComparison}
Let $\h_F$ be as in Section \ref{holocomparison}, and let $s_\sigma \colon \h_F \to \C$ be the smooth function $s_\sigma = s_\sigma(\underline{z}) = \frac{1}{z_\sigma - \overline{z}_\sigma}$. Following \cite[Section 13.11]{Shi00} we define
\begin{defn}\label{ShiNHHMF}
A nearly holomorphic Hilbert modular form of level $\Gamma$, weight $\kappa = (k_\sigma)_\sigma$, and type ${\nu} = (r_\sigma)_\sigma$ is a $C^\infty$ function $f \colon \h_F \to \C$ such that
\begin{enumerate}
	\item $f(\underline{z})$ is \emph{nearly holomorphic}, i.e., it can be written as a polynomial in the variables $s_\sigma$ whose coefficients are holomorphic functions $\h_F \to \C$ and whose degree as a function of only $s_\sigma$ is at most $r_\sigma$, and
	\item for all $\gamma \in \Gamma$ and $\underline{z} = (z_\sigma)_\sigma \in \h_F$, we let $\mu(\gamma,\underline{z}) = (\sigma(c)z_\sigma + \sigma(d))_\sigma \in \prod_\sigma \G_m(\C) \cong \T(\C)$, and $f$ satisfies the transformation property
	\begin{equation}
		f(\gamma \cdot \underline{z}) = \kappa(\mu(\gamma,\underline{z})) f(\underline{z}).
	\end{equation}
\end{enumerate}
\end{defn}
Let $s = s(\underline{z}) = \left(s_\sigma(\underline{z})\right)_\sigma$. For $\kappa = (k_\sigma)_\sigma$, we may write $\kappa(s) = \prod_\sigma s_\sigma^{k_\sigma}$ by viewing $s$ as an element of $\prod_\sigma \G_m(\C) \cong \T(\C)$. The Maass--Shimura differential operators are defined to be the composition
\begin{equation}\label{definemaassshimura}
	D_\kappa^\sigma(f) = \kappa(s) \frac{\partial}{\partial z_\sigma}\left[\kappa(s\inv)f \right].
\end{equation}
It is classical that $D_\kappa^\sigma$ raises the weight by $2\sigma$ and the type by $\sigma$.

We have defined nearly holomorphic Hilbert modular forms in terms of functions on $T^{\times,+}_{\mathcal{H}}$. In order to pull these back to nearly holomorphic functions on $\h_F$, we need to extend our map $\h_F \to T^\times_{\underline{\omega},/Y(\C)}$ from Section \ref{holocomparison} to a map $\h_F \to T^{\times,+}_{\mathcal{H},/Y(\C)}$.

For $\underline{z} = (z_\sigma)_\sigma \in \h_F$, we get the Abelian variety $A_{\underline{z}} = (\mathcal{O}_F \otimes_\Z \C)/L_{\underline{z}}$. Its tangent space is $\mathcal{O}_F \otimes_\Z \C \cong \prod_\sigma \C$, which has a natural $\R$-basis: the copy of $\C$ corresponding to $\sigma$ is spanned by $1$ and $z_\sigma$. The de Rham cohomology $H^1_{dR}(A_{\underline{z}})$ is the complexification of the cotangent bundle, so this basis of the tangent space gives a dual basis $\{\alpha_\sigma,\beta_\sigma \mid \sigma \in I\}$ for the de Rham cohomology. For any $(a_\sigma + b_\sigma z_\sigma) \in \prod_\sigma \C$,
\begin{equation}
	\alpha_\sigma\left(  (a_{\sigma} + b_\sigma z_\sigma)_\sigma \right) = a_\sigma \qquad \text{and} \qquad \beta_\sigma\left(  (a_{\sigma} + b_\sigma z_\sigma)_\sigma \right) = b_\sigma.
\end{equation}
This $\C$-basis for $H^1_{dR}(A_{\underline{z}})$ gives rise to a $\mathcal{O}_F \otimes_\Z \C$-basis $(\alpha,\beta)$ for the de Rham cohomoogy. It is horizontal for the Gauss--Manin connection, $\nabla(\alpha_\sigma) = \nabla(\beta_\sigma) = 0$. However, it does not respect the filtration, so it does not determine an element of $T^{\times,+}_\mathcal{H}$. Instead we take d$w$ from Section \ref{holocomparison}, which trivializes $\underline{\omega}$, and $\beta$ above. This $(\text{d}w,\beta)$ is a holomorphic frame for $H^1_{dR}(A_{\underline{z}})$ as a $\mathcal{O}_F \otimes \C$-module.

As sheaves of $C^\infty(\h_F,\C)$-modules, there is another basis $(\text{d}w,\text{d}\overline{w})$ for $H^1_{dR}(A_{\underline{z}})$. This basis does not give a suitable element of $T^{\times,+}_{\mathcal{H}}$, but it does if we replace it by $(\text{d}w,-\text{d}\overline{w}\,s)$. Our two bases are related by
\begin{equation} \label{relatebasesofderham}
	(\text{d}w,-\text{d}\overline{w}\,s) = (\text{d}w,\beta) \begin{pmatrix}
		1 & -s \\ 0 & 1
	\end{pmatrix}.
\end{equation}

We define a map $\phi$ from global sections of $\mathcal{V}_{\kappa}^{\nu}$ to nearly holomorphic Hilbert modular forms in the sense of Definition \ref{ShiNHHMF},
\begin{equation}
	\phi(f)(\underline{z}) = f(A_{\underline{z}},\iota_{\underline{z}},\psi_{\underline{z}},\lambda_{\underline{z}},\text{d}w, -\text{d}\overline{w}\,s)|_{\underline{Y} = 0}.
\end{equation}

We may use Equation \eqref{relatebasesofderham} to get another description. Since $(\text{d}w,\beta)$ is a holomorphic frame, we may write $f(A_{\underline{z}},\iota_{\underline{z}},\psi_{\underline{z}},\lambda_{\underline{z}},\text{d}w,\beta) = P_f(\underline{Y})$ for some polynomial $P_f$ of degree at most $r_\sigma$ in each $Y_\sigma$ whose coefficients vary holomorphically with $\underline{z}$. Then,
\begin{equation}
\begin{array}{r c l}
	\phi(f)(\underline{z}) &=& f(A_{\underline{z}},\iota_{\underline{z}},\psi_{\underline{z}},\lambda_{\underline{z}},\text{d}w,-\text{d}\overline{w}\,s)|_{\underline{Y} = 0} \\\\
	&=& f\left(A_{\underline{z}},\iota_{\underline{z}},\psi_{\underline{z}},\lambda_{\underline{z}},(\text{d}w,\beta)\begin{pmatrix}
		1 & -s \\ 0 & 1
	\end{pmatrix} \right)|_{\underline{Y} = 0} \\\\
	&=& \begin{pmatrix}
		1 & s \\ 0 & 1
	\end{pmatrix} \cdot f(A_{\underline{z}},\iota_{\underline{z}},\psi_{\underline{z}},\lambda_{\underline{z}},\text{d}w,\beta)|_{\underline{Y} = 0} \\\\
	&=& \begin{pmatrix}
		1 & s \\ 0 & 1
	\end{pmatrix} \cdot P_f(\underline{Y})|_{\underline{Y} = 0} \\\\
	&=& P_f(\underline{Y} + s)|_{\underline{Y} = 0} = P_f(s(\underline{z})).
\end{array}
\end{equation}
So $\phi(f)(\underline{z}) = f(A_{\underline{z}},\iota_{\underline{z}},\psi_{\underline{z}},\lambda_{\underline{z}},\text{d}w,-\text{d}\overline{w}\,s)|_{\underline{Y} = 0} = f(A_{\underline{z}},\iota_{\underline{z}},\psi_{\underline{z}},\lambda_{\underline{z}},\text{d}w,\beta)|_{\underline{Y} = s(\underline{z})} = P_f(s(\underline{z}))$. Since $P_f$ is a polynomial of degree at most $r_\sigma$ in the $Y_\sigma$'s whose coefficients are holomorphic functions on $\h_F$, $\phi(f) = P_f(s(\underline{z}))$ is such a polynomial in the $s_\sigma$'s whose coefficients are such holomorphic functions $\h_F \to \C$. This map leads to two comparison theorems.
\begin{thm}\label{compareholothm}
Let $N^{\nu}_\kappa$ be the space of nearly holomorphic Hilbert modular forms of level $\Gamma_1(N)$, weight $\kappa$, and type $\nu$ in the sense of Definition \ref{ShiNHHMF}. Then $\phi \colon H^0(X,\mathcal{V}_\kappa^{\nu}) \to N_\kappa^{\nu}$ is an isomorphism.
\end{thm}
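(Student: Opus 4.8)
The plan is to read off $\phi$ as a pullback along a frame map and then verify, in turn, that $\phi$ is well-defined into $N_\kappa^\nu$, injective, and surjective. Let $\tilde\theta \colon \h_F \to T^{\times,+}_{\mathcal{H},/Y(\C)}$ be the $C^\infty$ frame map $\underline{z} \mapsto (A_{\underline z},\iota_{\underline z},\psi_{\underline z},\lambda_{\underline z},\text{d}w,-\text{d}\overline{w}\,s)$ used to define $\phi$, so that $\phi(f)(\underline z) = f(\tilde\theta(\underline z))|_{\underline{Y}=0}$. Comparing $\tilde\theta$ with the holomorphic frame $(\text{d}w,\beta)$ via Equation \eqref{relatebasesofderham} identifies $\phi(f) = P_f(s)$, where $P_f$ is the pullback of $f$ along $(\text{d}w,\beta)$; since $(\text{d}w,\beta)$ is a holomorphic frame, $P_f$ is a polynomial of degree at most $r_\sigma$ in each $Y_\sigma$ with holomorphic coefficients, which already exhibits $\phi(f)$ as nearly holomorphic of the correct shape. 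What remains for well-definedness is the transformation property of Definition \ref{ShiNHHMF}(2), and after that injectivity and surjectivity.

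For the transformation property I would compute the automorphy factor of $\tilde\theta$ under $\gamma \in \Gamma$. Both $\tilde\theta(\underline z)$ and $\tilde\theta(\gamma\cdot\underline z)$ are filtration-respecting frames over the single point $[\underline z]\in Y(\C)$, so they differ by right multiplication by some $g(\gamma,\underline z)\in Q(\C)$. The isomorphism $A_{\underline z}\xrightarrow{\sim} A_{\gamma\cdot\underline z}$ is multiplication by $\mu(\gamma,\underline z)\inv$ on $\mathcal{O}_F\otimes\C$, so $\text{d}w$ and $\text{d}\overline{w}$ scale by $\mu(\gamma,\underline z)\inv$ and $\overline{\mu(\gamma,\underline z)}\inv$ respectively; meanwhile a short computation using $\det\gamma=1$ and the totally real hypothesis (which makes $\sigma(c),\sigma(d)$ real, hence $\overline{\mu(\gamma,\underline z)_\sigma}=\sigma(c)\overline{z}_\sigma+\sigma(d)$) gives $s_\sigma(\gamma\cdot\underline z)=\mu(\gamma,\underline z)_\sigma\,\overline{\mu(\gamma,\underline z)_\sigma}\,s_\sigma(\underline z)$. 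These combine to show that the $C^\infty$-frame transforms by the diagonal element $\operatorname{diag}(\mu\inv,\mu)\in H(\C)\subset Q(\C)$. Feeding this into the homogeneity $f(\alpha g)=g\inv\cdot f(\alpha)$ and evaluating at $\underline{Y}=0$, the rescaling $\underline{Y}\mapsto \mu^{-2}\underline{Y}$ induced by the $Q$-action drops out and only the scalar $\kappa(\mu(\gamma,\underline z))$ survives, yielding exactly $\phi(f)(\gamma\cdot\underline z)=\kappa(\mu(\gamma,\underline z))\phi(f)(\underline z)$.

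Injectivity follows from the uniqueness of the representation of a nearly holomorphic function as a polynomial in the $s_\sigma$ with holomorphic coefficients (see \cite[Section 13.11]{Shi00}): this uniqueness recovers $P_f$ from $\phi(f)=P_f(s)$, and since a section of the vector bundle $\mathcal{V}_\kappa^\nu$ is determined by its restriction to the dense open $Y$ and, by $Q$-equivariance, by its value on a single frame in each fiber, $P_f\equiv 0$ forces $f=0$. For surjectivity I would run this in reverse: given $g=\sum_I c_I(\underline z)\,s^I\in N_\kappa^\nu$, set $P=\sum_I c_I(\underline z)\,\underline{Y}^I\in V_\kappa^\nu$ and define a $Q$-equivariant holomorphic section over $\h_F$ by $f(\tilde\theta(\underline z))=P(\underline z)$ (extended off the image of $\tilde\theta$ by homogeneity); the transformation property of $g$ is precisely the $\Gamma$-invariance needed for $f$ to descend to $Y(\C)$, and GAGA together with the Koecher principle (available since $[F:\Q]=d>1$) promote this analytic section over $Y(\C)$ to an algebraic section of $\mathcal{V}_\kappa^\nu$ over all of $X$.

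The main obstacle is the transformation property, exactly as flagged after the holomorphic comparison in Section \ref{holocomparison}. The difficulty is conceptual rather than computational: because $\gamma$ does not lie in $Q$, the filtration-respecting frame $(\text{d}w,\beta)$ does not transform by an element of $Q$, and it is only after passing to the $C^\infty$-frame $(\text{d}w,-\text{d}\overline{w}\,s)$ — where the holomorphic and antiholomorphic pieces are each canonical and the factor $s_\sigma$ absorbs the off-diagonal contribution — that the automorphy factor becomes diagonal and the weight-$\kappa$ transformation falls out. The remaining inputs (uniqueness of near-holomorphic expansions, GAGA, and Koecher) are standard.
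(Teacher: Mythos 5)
Your route is the same as the paper's: identify $\phi(f)=P_f(s)$ via Equation \eqref{relatebasesofderham}, then get the transformation law by showing that the $C^\infty$ frame $(\text{d}w,-\text{d}\overline{w}\,s)$ transforms under $\gamma$ by the diagonal element $\operatorname{diag}(\mu\inv,\mu)\in H(\C)\subset Q(\C)$ and feeding this into the homogeneity $f(\alpha g)=g\inv\cdot f(\alpha)$; your explicit derivation of that factor (using $\det\gamma=1$ and $s_\sigma(\gamma\cdot\underline{z})=\mu_\sigma\overline{\mu}_\sigma s_\sigma(\underline{z})$) supplies a step the paper merely asserts, and your injectivity argument correctly fills in the paper's remark that the correspondence $P_f(\underline{Y})\mapsto P_f(s)$ ``is easily reversed.''

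There is, however, a concrete error in your surjectivity leg. You define the candidate section by $f(\tilde\theta(\underline{z}))=P(\underline{z})$, i.e.\ you prescribe the value $\sum_I c_I(\underline{z})\,\underline{Y}^I$ on the $C^\infty$ frame $(\text{d}w,-\text{d}\overline{w}\,s)$. With that definition $\phi(f)(\underline{z})=f(\tilde\theta(\underline{z}))|_{\underline{Y}=0}=c_0(\underline{z})$, the constant coefficient of $g$, not $g$ itself; worse, such an $f$ is not a holomorphic section at all, since by Equation \eqref{relatebasesofderham} its value on the holomorphic frame $(\text{d}w,\beta)$ would be $P(\underline{Y}-s)$, whose coefficients involve $\overline{z}$, so there is nothing for Koecher and GAGA to act on. The repair is what the paper's inversion actually does: prescribe $P$ on the \emph{holomorphic} frame, $f(A_{\underline{z}},\ldots,\text{d}w,\beta)=\sum_I c_I(\underline{z})\,\underline{Y}^I$, equivalently $f(\tilde\theta(\underline{z}))=P(\underline{Y}+s)$, and extend by $Q$-homogeneity; then $\phi(f)=P(s)=g$ and the coefficients are holomorphic. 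Relatedly, your closing claim that $(\text{d}w,\beta)$ ``does not transform by an element of $Q$'' is false: since $T^{\times,+}_{\mathcal{H}}$ is a $Q$-torsor, any two filtration-respecting frames of a fiber differ by an element of $Q$, and indeed
\begin{equation*}
	\gamma^*(\text{d}w,\beta)=(\text{d}w,\beta)\begin{pmatrix} \mu\inv & -c \\ 0 & \mu \end{pmatrix}.
\end{equation*}
The genuine point is that this factor is not \emph{diagonal}; its unipotent part is exactly what the descent condition for your repaired $f$ must absorb (one checks, using $\overline{\mu}=\mu-c/s$, that the transformation law of $g$ is equivalent to equivariance of $f$ under this upper-triangular factor), after which your appeal to Koecher and GAGA goes through.
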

\begin{proof}
Let $f$ be a global section of $\mathcal{V}_\kappa^{\nu}$. By the discussion immediately preceding this theorem, $\phi(f)$ is written as a polynomial of the correct degree in the $s_\sigma$'s whose coefficients are holomorphic maps $\h_F \to \C$. The correspondence $P_f(\underline{Y}) \mapsto P_f(s)$ is easily reversed, so we just need to show that the transformation property is preserved. For $\underline{z} = (z_\sigma)_\sigma \in \h_F$ and $\gamma \in \Gamma$, we let $\mu(\gamma,\underline{z}) \in \T(\C)$ be
\begin{equation}
	\mu(\gamma,\underline{z}) = (\sigma(c)z_\sigma + \sigma(d))_\sigma \in \prod_\sigma \G_m(\C) \cong \T(\C), \qquad \gamma = \begin{pmatrix}
		a & b \\ c & d
	\end{pmatrix}.
\end{equation}
Then we want to show that $\phi(f)(\gamma \cdot \underline{z}) = \kappa(\mu(\gamma,\underline{z}))\phi(f)(\underline{z}) = \mu(\gamma,\underline{z}) \cdot \phi(f)(\underline{z})$. The map $\gamma$ gives an isomorphism $A_{\underline{z}} \xrightarrow{\sim} A_{\gamma \cdot \underline{z}}$, $w \mapsto \mu(\gamma,\underline{z})\inv w$. This modifies the frames for the de Rham cohomology
\begin{equation}
	\gamma^*(\text{d}w,-\text{d}\overline{w}\,s) = (\text{d}w,-\text{d}\overline{w}\,s)\begin{pmatrix}
		\mu(\gamma,\underline{z})\inv & 0 \\ 0 & \mu(\gamma,\underline{z})
	\end{pmatrix}.
\end{equation}
From the action of $Q$ on $V_\kappa^{\nu}$, we get the desired transformation property 
\begin{equation}
\begin{array}{r c l}
	\phi(f)(\gamma \cdot \underline{z}) &=& f(A_{\gamma \cdot \underline{z}},\iota_{\gamma \cdot \underline{z}},\psi_{\gamma \cdot \underline{z}},\lambda_{\gamma \cdot \underline{z}},\text{d}w,-\text{d}\overline{w}\,s)|_{\underline{Y} = 0} \\\\
	&=& f(A_{\underline{z}},\iota_{\underline{z}},\psi_{\underline{z}},\lambda_{\underline{z}},\gamma^*\text{d}w,\gamma^*(-\text{d}\overline{w}\,s))|_{\underline{Y} = 0} \\\\
	&=& f\left(A_{\underline{z}},\iota_{\underline{z}},\psi_{\underline{z}},\lambda_{\underline{z}},(\text{d}w,-\text{d}\overline{w}\,s)\begin{pmatrix}
		\mu(\gamma,\underline{z})\inv & 0 \\ 0 & \mu(\gamma,\underline{z})
	\end{pmatrix}\right)|_{\underline{Y} = 0} \\\\
	&=& \mu(\gamma,\underline{z}) \cdot f\left(A_{\underline{z}},\iota_{\underline{z}},\psi_{\underline{z}},\lambda_{\underline{z}},\text{d}w,-\text{d}\overline{w}\,s\right)|_{\underline{Y} = 0}\\\\
	&=& \kappa(\mu(\gamma,\underline{z}))\phi(f)(\underline{z}).
\end{array}
\end{equation}
Thus $\phi$ is an isomorphism from the space of global sections of $\mathcal{V}_\kappa^{\nu}$ to the space of nearly holomorphic Hilbert modular forms of weight $\kappa$ and type $\nu$ in the sense of Shimura.
\end{proof}

\begin{thm}
The map $\nabla_\sigma$ is the analog of the Maass--Shimura operator, in the sense that $D_\kappa^\sigma \circ \phi = \phi \circ \nabla_\sigma$.
\end{thm}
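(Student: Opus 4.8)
The plan is to evaluate both operators in the holomorphic frame $(\mathrm{d}w,\beta)$ and reduce the identity to a bookkeeping comparison. Recall from the discussion preceding Theorem \ref{compareholothm} that writing $P_g(\underline{Y}) = g(A_{\underline{z}},\iota_{\underline{z}},\psi_{\underline{z}},\lambda_{\underline{z}},\mathrm{d}w,\beta)$ for the value of a section $g$ in this frame, one has $\phi(g)(\underline{z}) = P_g(s(\underline{z}))$, with $P_g$ polynomial in each $Y_\sigma$ and with holomorphic coefficients. Thus it suffices to compute the polynomial $P_{\nabla_\sigma f}$, substitute $\underline{Y}=s$, and match the result with $D_\kappa^\sigma(P_f(s))$ computed directly from Equation \eqref{definemaassshimura}.

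First I would pin down the Gauss--Manin connection matrix in the holomorphic frame. The frame $(\alpha,\beta)$ of this section is horizontal, so $X(D,(\alpha,\beta))=0$ for every derivation $D$. Since $\mathrm{d}w_\sigma = \alpha_\sigma + z_\sigma\beta_\sigma$, we have $(\mathrm{d}w,\beta) = (\alpha,\beta)\,g$ where $g$ is unipotent lower-triangular with entry $z_\sigma$ below the diagonal in its $\sigma$-component. The change-of-frame rule $X \mapsto g\inv X g + g\inv \mathrm{d}g$ then gives $X\!\left(\partial/\partial z_\sigma,(\mathrm{d}w,\beta)\right) = \mu_\sigma^-$ in the $\sigma$-component (and $0$ elsewhere). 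This is the crucial point: in the holomorphic frame, differentiating along $z_\sigma$ is implemented by the Lie algebra element $\mu_\sigma^-$ whose action on $V_\kappa$ is recorded in Equation \eqref{actionofmusigmaminus}.

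Next I would fix the Kodaira--Spencer normalization. Tracing Equation \eqref{KS} through the horizontal frame shows $KS_\sigma(\mathrm{d}z_\sigma) = \mathrm{d}w_\sigma^{\otimes 2}$, while $\mathrm{d}z_\tau \mapsto 0$ for $\tau\neq\sigma$, so that the twist $\mathcal{V}_\kappa\otimes\underline{\omega}_\sigma^{\otimes 2}\cong\mathcal{V}_{\kappa+2\sigma}$ is trivialized compatibly with $(\mathrm{d}w,\beta)$; in particular the polynomial coefficients are unchanged under this identification, and the $\varepsilon_\sigma$ occurring in $\mu_\sigma^-$ acts on the line $W_\kappa$ by the weight-$\kappa$ scalar $k_\sigma$. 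Combining this with Proposition \ref{ConnectionOnAssociatedBundleWithgAction} and the previous step, the value of $\nabla_\sigma f$ in the holomorphic frame is
\begin{equation}
P_{\nabla_\sigma f}(\underline{Y}) = \frac{\partial}{\partial z_\sigma}P_f(\underline{Y}) + \mu_\sigma^-\cdot P_f(\underline{Y}) = \frac{\partial}{\partial z_\sigma}P_f(\underline{Y}) + k_\sigma Y_\sigma\, P_f(\underline{Y}) - Y_\sigma^2\frac{\partial}{\partial Y_\sigma}P_f(\underline{Y}),
\end{equation}
and $\phi(\nabla_\sigma f)(\underline{z})$ is obtained by setting $\underline{Y}=s$.

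Finally I would compute $D_\kappa^\sigma(\phi(f))$ directly. Using $\partial s_\tau/\partial z_\sigma = -\delta_{\sigma\tau}\,s_\sigma^2$ one gets $\partial_{z_\sigma}\kappa(s\inv) = k_\sigma s_\sigma\,\kappa(s\inv)$, and the chain rule gives $\partial_{z_\sigma}[P_f(s)] = (\partial_{z_\sigma}P_f)(s) - s_\sigma^2(\partial_{Y_\sigma}P_f)(s)$; the product rule together with the cancellation $\kappa(s)\kappa(s\inv)=1$ then yields exactly $P_{\nabla_\sigma f}(s)$, proving $D_\kappa^\sigma\circ\phi = \phi\circ\nabla_\sigma$. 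I expect the main obstacle to be the first two steps: correctly identifying the holomorphic Gauss--Manin connection with $\mu_\sigma^-$ via the horizontality of $(\alpha,\beta)$, and keeping the Kodaira--Spencer normalization and the weight shift $\kappa\to\kappa+2\sigma$ consistent so that the scalar $k_\sigma$ matches on both sides; once these are settled the remaining comparison is a routine product- and chain-rule calculation.
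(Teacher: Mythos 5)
Your proposal is correct and follows essentially the same route as the paper: write $\phi(f)=P_f(s)$, compute $D_\kappa^\sigma(\phi(f))$ by the product and chain rules using $\partial s_\sigma/\partial z_\sigma=-s_\sigma^2$, compute $\nabla_\sigma f$ in the frame $(\mathrm{d}w,\beta)$ via $\nabla_\sigma(\mathrm{d}w,\beta)=(\mathrm{d}w,\beta)\mu_\sigma^-$ together with Equation \eqref{actionofmusigmaminus}, and evaluate at $\underline{Y}=s$. The only difference is that you actually derive the identity $X\!\left(\partial/\partial z_\sigma,(\mathrm{d}w,\beta)\right)=\mu_\sigma^-$ from the horizontality of $(\alpha,\beta)$ and the change-of-frame rule, a fact the paper simply asserts, so your writeup is if anything more complete.
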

\begin{proof}
Want to show $D_\kappa^\sigma(\phi(f))(\underline{z}) = \phi(\nabla_\sigma f)(\underline{z})$. We compute the left hand side, using the fact that $\dfrac{\partial s_\sigma}{\partial z_\sigma} = -s_\sigma^2$, while $\dfrac{\partial s_\sigma}{\partial z_\tau} = 0$ if $\sigma \neq \tau$. By the definition of the Maass--Shimura operator in Equation \eqref{definemaassshimura},
\begin{equation}
\begin{array}{rcl}
	D_\kappa^\sigma (\phi(f))(\underline{z}) &=& \kappa(s(\underline{z})) \dfrac{\partial}{\partial z_\sigma}\left[ \kappa(s(\underline{z}))\inv \phi(f)({\underline{z}})\right] \\\\
	&=& s_\sigma k_\sigma\phi(f)(\underline{z}) + \left[\dfrac{\partial}{\partial z_\sigma} \phi(f)\right](\underline{z}).
\end{array}
\end{equation}
Writing $\phi(f)(\underline{z}) = P_f(s)$ as above, we differentiate with respect to $z_\sigma$ noting that both the coefficients of $P_f$ and the input $s$ depend on $z_\sigma$.
\begin{equation}
	\frac{\partial}{\partial z_\sigma}\phi(f)(\underline{z}) = \frac{\partial}{\partial z_\sigma}P_f(s) = \frac{\partial P_f(\underline{Y})}{\partial z_\sigma}\Big|_{\underline{Y} = s} + \dfrac{\partial P_f}{\partial Y^\sigma}(s) \cdot \frac{\partial s_\sigma}{\partial z_\sigma}.
\end{equation}
We are left with
\begin{equation}
	D_\kappa^\sigma (\phi(f))(\underline{z}) = s_\sigma k_\sigma \phi(f)(\underline{z}) + \frac{\partial P_f}{\partial z_\sigma}\Big|_{\underline{Y} = s} - s_\sigma^2 \frac{\partial P_f}{\partial Y^\sigma} (s).
\end{equation}

For the right hand side, we use the fact that $\nabla_\sigma(\text{d}w,\beta) = (\text{d}w,\beta)\mu_\sigma^-$ to compute using the definition of $\nabla_\sigma$.
\begin{equation}	
\begin{array}{rcl}
	(\nabla_\sigma f)(A_{\underline{z}},\iota_{\underline{z}},\psi_{\underline{z}},\lambda_{\underline{z}},\text{d}w,\beta) &=& \left[\dfrac{\partial}{\partial z_\sigma}f + \mu_\sigma^- \cdot f\right](A_{\underline{z}},\iota_{\underline{z}},\psi_{\underline{z}},\lambda_{\underline{z}},\text{d}w,\beta) \\\\
	&=& \dfrac{\partial}{\partial z_\sigma}P_f(\underline{Y}) + \mu_\sigma^- \cdot P_f(\underline{Y}). \\\\
	&=& \dfrac{\partial P_f}{\partial z_\sigma}(\underline{Y}) + Y^\sigma\varepsilon_\sigma \cdot P(\underline{Y}) - (Y^\sigma)^2 \dfrac{\partial P_f}{\partial Y^\sigma} (\underline{Y}).
\end{array}
\end{equation}
Evaluating at $\underline{Y} = s$ gives $\phi(\nabla_\sigma f)(\underline{z})$. Since $\varepsilon_\sigma\cdot P_f=k_\sigma P_f$, we find that these agree.
\end{proof}

The fact that $\nabla_\sigma$ above is the correct analog for the Maass--Shimura operators in this formulation of nearly holomorphic Hilbert modular forms informs how we construct the analog in the overconvergent setting.

\begin{rem}\label{holoprojectionremark}
Let $f$ be a nearly holomorphic Hilbert modular form of weight $\kappa$ and type $\nu$, written as a polynomial
\begin{equation}
	f(\underline{z}) = \sum_{r=(r_\sigma)_\sigma \leq \nu} f_{r}(\underline{z})s^r = \sum_{r=(r_\sigma)_\sigma \leq \nu} f_{r}(\underline{z})\prod_\sigma s_\sigma^{r_\sigma}.
\end{equation}
The holomorphic projection of $f$ is the holomorphic function $f_0$. Implicit in this statement is the reliance on a given splitting of the Hodge filtration, which picks out $\underline{\omega}^\vee$ as a certain submodule of $\mathcal{H}$ consisting of anti-holomorphic differentials. Thus we may say that \emph{with respect to this splitting}, the holomorphic projection of $f$ is $f_0$.

Viewing nearly holomorphic Hilbert modular forms as homogeneous functions to $V_\kappa$, the holomorphic projection with respect to this splitting can be recovered by the map $V_\kappa = W_\kappa[\underline{Y}] \to W_\kappa$ which sets $\underline{Y} = 0$; i.e., realizing the form as a nearly holomorphic function and then taking its holomorphic projection is the same as setting $\underline{Y} = 0$ and then realizing the form. We note that at CM points, this splitting of the Hodge filtration is algebraic, and is still given by setting $\underline{Y} = 0$.

Finally we note that the space of splittings of the Hodge filtration is isomorphic to the coset space $H \backslash Q$: a splitting corresponds an $\mathcal{O}_F$-basis for $\mathcal{H}$ up to scale, where the first vector in the basis should generate $\underline{\omega}$ and the second generates its complement. The group $Q$ acts freely and transitively on these bases, and the action of $H$ simply rescales the vectors in the basis in such a way that it does not change the direct sum decomposition of $\mathcal{H} = \underline{\omega} \oplus \underline{\omega}^\vee$. Viewing nearly Hilbert modular forms as functions to $V_\kappa = W_\kappa \otimes \mathcal{O}_{H \backslash Q}$, we can regard the holomorphic projection with respect to a splitting of the Hodge filtration as evaluation at the point corresponding to that splitting in $\mathcal{O}_{H \backslash Q}$. Under this view, the splitting that picks out the anti-holomorphic differentials as a complement to $\underline{\omega} \subset \mathcal{H}$ serves as our ``base point" corresponding to the trivial coset $H$, with coordinates $\underline{Y} = 0$.

By viewing $V_\kappa = W_\kappa \otimes \mathcal{O}_{H \backslash Q}$ and $H \backslash Q$ as being (non-canonically) isomorphic to the space of splittings for the Hodge filtration, we get the following philosophy: a nearly automorphic form is a gadget that takes in a splitting of the Hodge filtration and spits out an honest automorphic form.
\end{rem}

\subsection{Nearly Overconvergent Hilbert Modular Forms}\label{NOHMFs}
There is a natural homomorphism $Q \to \T$ that picks out the top left entry. Let $Q_w^0$ be the rigid analytic group which is the preimage of $\T_w^0$ under this projection, and $Q_w$ the preimage of $\T_w$.

We have previously considered the $\mathcal{O}_K$-schemes $T^{\times}_{\underline{\omega}} = \operatorname{Isom}_X(\mathcal{O}_F \otimes_\Z \mathcal{O}_X,\underline{\omega})$ and $T^{\times,+}_{\mathcal{H}} = \operatorname{Isom}_X^+(\left[\mathcal{O}_F \otimes_\Z \mathcal{O}_X\right]^{\oplus 2},\mathcal{H})$, where the $+$ superscript means that the isomorphisms should respect the filtrations. We write $\mathcal{T}^\times_{\underline{\omega},an}$ and $\mathcal{T}^{\times,+}_{\mathcal{H},an}$ for their rigid analytifications, and $\mathcal{T}^\times_{\underline{\omega},an}(\underline{v})$ and $\mathcal{T}^{\times,+}_{\mathcal{H},an}(\underline{v})$ for their base changes to $\mathcal{X}(\underline{v})$. We have a map $\mathcal{T}^{\times,+}_{\mathcal{H},an}(\underline{v}) \to \mathcal{T}^\times_{\underline{\omega},an}(\underline{v})$ given by forgetting everything but the isomorphism on the submodules $\mathcal{O}_F \otimes_\Z \mathcal{O}_{\mathcal{X}(\underline{v})}$ and $\underline{\omega}$.

Then let $\mathcal{T}^\times_{\mathcal{F},w} \to \mathcal{I}_w \to \mathcal{X}(\underline{v})$ be the rigid fiber of the chain of morphisms defined in Section \ref{OverconvergentSection}. This maps to $\mathcal{T}^\times_{\underline{\omega},an}$ as well.\footnote{It does \emph{not} map to the rigid fiber of the formal completion of $T^\times_{\underline{\omega}}$, as the points of either space should be $\mathcal{O}_F \otimes_\Z \mathcal{O}_K$-bases for the respective sheaves, which do not match up. The points of $\mathcal{T}^\times_{\underline{\omega},an}$ are instead $\mathcal{O}_F \otimes K$-bases for $\underline{\omega}$.} We thus form the fiber product
\begin{equation}
	\mathcal{T}^{\times,+}_{\mathcal{H},w}(\underline{v}) \coloneqq \mathcal{T}^{\times,+}_{\mathcal{H},an}(\underline{v}) \times_{\mathcal{T}^\times_{\underline{\omega},an}(\underline{v})} \mathcal{T}^\times_{\mathcal{F},w}(\underline{v}).
\end{equation}
The map $\pi_w^+ \colon \mathcal{T}^{\times,+}_{\mathcal{H},w}(\underline{v}) \to \mathcal{X}(\underline{v})$ is a torsor for the group $Q_w$.

Let $\kappa$ be a $w$-analytic weight. There is an associated rigid analytic representation $W_{\kappa,w}$ of $\T_w$, which we inflate to $Q_w$. In similar fashion to the holomorphic case, we define the analytic $(\mathfrak{g},Q_w)$-module $V_{\kappa,w}$ by
\begin{equation}
	V_{\kappa,w} = W_{\kappa,w} \otimes \mathcal{O}_{H \backslash Q}
\end{equation}

We give $\mathfrak{g}(K)$ the same basis as before; since $Q_w$ is open in $Q$, its Lie algebra is still $\mathfrak{q} \subset \mathfrak{g}$. We define the actions of $\mathfrak{g}(K)$ and $Q_w(K)$ by the same formulas,
\begin{equation}
	(g \cdot P)(\underline{Y}) = a \cdot P(a\inv(b+\underline{Y}d)) \quad \text{for all } g = \begin{pmatrix}
		a & b \\ 0 & d
	\end{pmatrix} \in Q_w(R),
\end{equation}
\begin{equation}
	(\mu_\sigma^- \cdot P)(\underline{Y}) = Y_\sigma\varepsilon_\sigma \cdot P(\underline{Y}) - Y_\sigma^2 \frac{\partial}{\partial Y_\sigma} P(\underline{Y}).
\end{equation}

Compatibility is checked in the same way. Since the action of $Q_w$ preserves the degree of each $Y_\sigma$, there is a $Q_w$-submodule $V_{\kappa,w}^{\nu} \subset V_{\kappa,w}$ consisting of polynomials with degree at most $r_\sigma$ in the variable $Y_\sigma$ for each $\sigma$. Since $\mathfrak{g}_\sigma$ raises the degree by at most $\sigma$, $\mathfrak{g}_\sigma V_{\kappa,w}^{\nu} \subset V_{\kappa,w}^{{\nu}+\sigma}$.

\begin{defn}
The sheaf of nearly $\underline{v}$-overconvergent Hilbert modular forms of level $\Gamma_1(N)$, $w$-analytic weight $\kappa$, and type ${\nu}$ is $\mathcal{V}_{\kappa,w}^{\nu}(\underline{v}) = \mathcal{T}^{\times,+}_{\mathcal{H},w}(\underline{v}) \times_{Q_w} V_{\kappa,w}^{\nu}$. A nearly overconvergent Hilbert modular form of level $\Gamma_1(N)$, $w$-analytic weight $\kappa$, and type ${\nu}$ is a section of $\mathcal{V}_{\kappa,w}^{\nu}(\underline{v})$ for some $\underline{v}$ with each $v_i$ satisfying $0 < v_i < \frac{1}{p^w}$.
\end{defn}

Proposition \ref{ConnectionOnAssociatedBundleWithgAction} gives us a connection $\nabla_{\kappa,w} \colon \mathcal{V}_{\kappa,w} \to \mathcal{V}_{\kappa,w} \otimes \Omega^1_{\mathcal{X}(\underline{v})}(\log C)$, and the Kodaira--Spencer morphism gives us our differential operators,
\begin{equation}
	\nabla_{\sigma,w} \colon \mathcal{V}_{\kappa,w} \to \mathcal{V}_{\kappa + 2\sigma,w}
\end{equation}

The discussion of Remark \ref{RaiseTypeBySigma} applies, giving
\begin{equation}
	\nabla_{\sigma,w} \colon \mathcal{V}_{\kappa,w}^{\nu} \to \mathcal{V}_{\kappa + 2\sigma,w}^{{\nu} + \sigma}.
\end{equation}

Each $\nabla_{\sigma,w}$ commutes with $\nabla_{\tau,w}$ for any pair of embeddings $\sigma, \tau \in I$. Thus we may unambiguously write $\nabla_{\kappa^\prime,w}$ for the differential operator that raises weights by $\kappa^\prime = \sum_\sigma 2k_\sigma^\prime\sigma$, and types by $\nu^\prime = (k_\sigma^\prime)_\sigma$. We can also base change to $\mathcal{X}(\underline{v}^\prime)$ for any $\underline{v}^\prime = (v_i^\prime)$ with $0<v_i^\prime<v_i$ for each $i$.

This construction works just as well for a single weight $\kappa$ as it does for the universal $w$-analytic weight $\kappa_w^{un}$, and pulling back by the inclusion of an affinoid open $\mathcal{U} \subset \mathcal{W}_w$ allows us to apply these differential operators to families parametrized by $\mathcal{U}$.

We summarize these results in the following theorem.

\begin{thm}\label{theorem}
Fix a tuple $\underline{v} = (v_i)$ with $0<v_i<\frac{1}{p^w}$ for all $i$. For each embedding $\sigma \colon F \to K$, and any $k \geq 1$, there is a differential operator $\nabla_\sigma^k$ acting on families of nearly $\underline{v}$-overconvergent Hilbert modular forms of $w$-analytic weight, which raises the weight by $2k\sigma$ and the type by $k$. The operators $\nabla_\sigma^k$ and $\nabla_\tau^\ell$ commute for any pair of embeddings $\sigma$ and $\tau$.
\end{thm}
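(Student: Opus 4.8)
The plan is to build $\nabla_\sigma^k$ as the $k$-fold iterate of the single operator $\nabla_{\sigma,w}$ constructed just above, and then to deduce its weight/type behaviour and its commutativity entirely from the corresponding properties of $\nabla_{\sigma,w}$; the theorem is in essence a repackaging of the results already established in this section.

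First I would set $\nabla_\sigma^k \coloneqq \nabla_{\sigma,w}^{\circ k}$, where each factor is the operator $\nabla_{\sigma,w}$ for the appropriate intermediate weight (as in Remark \ref{RaiseTypeBySigma}, the weight is suppressed from the notation since it is determined by context). To make sense of this composition I must check that the target of one application is again a sheaf on which $\nabla_{\sigma,w}$ is defined: a single application carries $\mathcal{V}_{\kappa,w}^{\nu}$ into $\mathcal{V}_{\kappa+2\sigma,w}^{\nu+\sigma}$, and since $2\sigma \in \Z[I]$ is algebraic, hence $w$-analytic for every $w$, the weight $\kappa + 2\sigma$ is again $w$-analytic and $\nabla_{\sigma,w}$ is defined on $\mathcal{V}_{\kappa+2\sigma,w}$. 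Moreover each factor is the operator on the fixed overconvergent locus $\mathcal{X}(\underline{v})$, built from the connection $\nabla_{\kappa,w}$ and the Kodaira--Spencer projection $\operatorname{KS}_\sigma$, neither of which alters $\underline{v}$; so the composition stays on the same $\mathcal{X}(\underline{v})$. A short induction on $k$ then yields
\begin{equation}
	\nabla_\sigma^k \colon \mathcal{V}_{\kappa,w}^{\nu} \to \mathcal{V}_{\kappa+2k\sigma,w}^{\nu+k\sigma},
\end{equation}
which is precisely the asserted effect of raising the weight by $2k\sigma$ and the type by $k$ in the $\sigma$-direction.

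For commutativity I would invoke the fact, recorded just before the statement, that $\nabla_{\sigma,w}$ and $\nabla_{\tau,w}$ commute for any $\sigma,\tau$ \--- itself a consequence of the integrability of $\nabla_{\kappa,w}$ from Proposition \ref{ConnectionOnAssociatedBundleWithgAction} together with the splitting of $\Omega^1_{\mathcal{X}(\underline{v})}(\log C)$ into its $\underline{\omega}_\sigma^{\otimes 2}$-summands under Kodaira--Spencer. Commuting operators have commuting powers (the intermediate weights match up by the $w$-analyticity bookkeeping above), so $\nabla_\sigma^k$ and $\nabla_\tau^\ell$ commute. The family statement requires no new argument: the construction of $\nabla_{\sigma,w}$ is natural in the weight, so it applies to the universal $w$-analytic character $\kappa_w^{un}$, and pulling back along an affinoid open $\mathcal{U} \hookrightarrow \mathcal{W}_w$ gives the operator on families over $\mathcal{U}$. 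The only genuinely delicate point in the whole argument is the one handled in the second paragraph: verifying that iteration never leaves the nearly-overconvergent framework, i.e.\ that the successive target weights remain $w$-analytic and the radius $\underline{v}$ is never forced to shrink, since this is exactly what makes the $k$-fold composition legitimate.
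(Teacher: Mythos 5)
Your proposal is correct and matches the paper's own treatment: Theorem \ref{theorem} is stated there as a summary of the construction of $\nabla_{\sigma,w}$ via Proposition \ref{ConnectionOnAssociatedBundleWithgAction} and Kodaira--Spencer, with $\nabla_\sigma^k$ being the $k$-fold iterate, commutativity carried over from the single operators (the paper simply records this, citing Katz in the holomorphic case), and families obtained by pulling back the universal $w$-analytic character along $\mathcal{U} \hookrightarrow \mathcal{W}_w$. Your explicit check that $\kappa + 2\sigma$ remains $w$-analytic (because algebraic twists preserve $w$-analyticity) is a point the paper leaves implicit, but it is the same argument.
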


\subsection{Integrality}\label{IntegralitySection}
As constructed, this differential operator is defined over $K$. However, when $p$ is unramified in $F$, it is defined over $\mathcal{O}_K$. Each module ${V}_\kappa^r$ is defined over $\Z$, but our description and the basis we chose are only defined over $K$ a priori. It is true that the connection $\nabla$ is defined over $\mathcal{O}_K$ in general; however, there is a problem to defining $\nabla_\sigma$ when $p$ is ramified in $F$.

Over $K$, we can decompose $\mathfrak{g} = \bigoplus_\sigma \mathfrak{g}_\sigma$. Any trivialization $\alpha \in T^{\times,+}_{\mathcal{H}}$ over an affine open $U = \operatorname{Spec}R$ defined over $K$ determines a trivialization of $\underline{\omega}$, and thus of
\begin{equation}
	\bigoplus_\sigma \underline{\omega}_\sigma^{\otimes 2} \cong \Omega^1_X(\log C).
\end{equation}
We let $D \in T_X(U)$ be the direction dual to that trivialization, and $X(D,\alpha) \in \mathfrak{g}(R)$ such that $\nabla(D)(v)(\alpha) = \left(X(D,\alpha)\cdot v\right)(\alpha)$. In fact, we get a basis $D_\sigma^\vee$ for each $\underline{\omega}_\sigma^{\otimes 2}$; we embed that sheaf as a summand of $\Omega^1_X(\log C)$ and view the set $\{D_\sigma^\vee\}$ as a basis of the latter sheaf. Let $D_\sigma \in T_X(U)$ be dual to $D_\sigma^\vee$. We have $X(D_\sigma,\alpha) \in \mathfrak{g}_\sigma(R)$, and $\sum_\sigma D_\sigma = D$ giving $\sum_\sigma X(D_\sigma,\alpha) = X(D,\alpha)$. Specifically, $X(D,\alpha) \in \mathfrak{g}(R)$ corresponds to the tuple $(X(D_\sigma,\alpha))_\sigma \in \bigoplus_\sigma \mathfrak{g}_\sigma(R)$. Under this formalism, we have $\nabla_\sigma(v)(\alpha) = \left(X(D_\sigma,\alpha) \cdot v\right)(\alpha)$.

With this in mind, we turn to integrality. When $p$ is unramified in $F$, the decomposition $\mathfrak{g} = \bigoplus_\sigma \mathfrak{g}_\sigma$ manifests over $\mathcal{O}_K$, and $\nabla_\sigma$ acts integrally. However, when $p$ ramifies, there is no guarantee that $X(D_\sigma,\alpha) \in \mathfrak{g}(\mathcal{O}_K)$ under the assumption that $X(D,\alpha) \in \mathfrak{g}(\mathcal{O}_K)$. However, we can quantify by how much this fails, and we do so using the following lemma.

\begin{lem} \label{lemmaramificationproductoflocalrings}
Let $r = (r_\sigma)_\sigma \in \prod_\sigma \mathcal{O}_K$. There exists some $\ell$ not depending on $r$ such that $p^\ell r \in \mathcal{O}_F \otimes \mathcal{O}_K \subset \prod_\sigma \mathcal{O}_K$.
\end{lem}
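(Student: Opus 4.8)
The plan is to recognize both $\mathcal{O}_F \otimes_\Z \mathcal{O}_K$ and $\prod_\sigma \mathcal{O}_K$ as full-rank $\mathcal{O}_K$-lattices inside the $d$-dimensional $K$-vector space $F \otimes_\Q K \cong \prod_\sigma K$, and then to invoke the structure theory of finitely generated torsion modules over the discrete valuation ring $\mathcal{O}_K$. The whole point is that the defect between the two lattices is supported at $p$ and has finite length.

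First I would fix the map realizing the inclusion. The splitting $F \otimes_\Q K \cong \prod_\sigma K$ sends $x \otimes y$ to $(\sigma(x)y)_\sigma$. Set $M$ to be the image of $\mathcal{O}_F \otimes_\Z \mathcal{O}_K$ and $N = \prod_\sigma \mathcal{O}_K$. Since each embedding $\sigma$ carries an algebraic integer of $F$ to an element of $\mathcal{O}_K$, every generator $x \otimes y$ with $x \in \mathcal{O}_F$ and $y \in \mathcal{O}_K$ maps into $N$, so $M \subseteq N$. Both are free $\mathcal{O}_K$-modules of rank $d = [F:\Q]$: the source because $\mathcal{O}_F$ is free of rank $d$ over $\Z$, and $N$ by construction. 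The inclusion $M \hookrightarrow N$ becomes an isomorphism after tensoring with $K$, since both then equal $\prod_\sigma K$; in particular it is injective and $M$ spans the same $K$-vector space as $N$.

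Next, because $M$ and $N$ have the same $K$-span, the quotient $N/M$ is a finitely generated torsion $\mathcal{O}_K$-module. As $\mathcal{O}_K$ is a discrete valuation ring with uniformizer $\pi_K$ and residue characteristic $p$, any such module has finite length and is therefore annihilated by some power $\pi_K^m$. Writing $e_K$ for the ramification index of $K/\Q_p$, so that $(p) = (\pi_K^{e_K})$ as ideals of $\mathcal{O}_K$, I take any $\ell$ with $e_K\ell \geq m$; then $p^\ell N = \pi_K^{e_K\ell} N \subseteq \pi_K^m N \subseteq M$. This $\ell$ is determined by the annihilator of the fixed module $N/M$ and hence does not depend on the chosen $r = (r_\sigma)_\sigma \in N$, which is exactly the assertion $p^\ell r \in \mathcal{O}_F \otimes \mathcal{O}_K$.

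There is essentially no serious obstacle here; the only point to be careful about is confirming that $N/M$ is genuinely torsion, i.e. that $M$ has full rank $d$, which is precisely where the hypothesis that $K$ splits $F$ enters. As a sanity check, when $p$ is unramified in $F$ the map $\mathcal{O}_F \otimes_\Z \mathcal{O}_K \to \prod_\sigma \mathcal{O}_K$ is already an isomorphism, so $N/M = 0$ and one may take $\ell = 0$, recovering the integrality of $\nabla_\sigma$ claimed earlier; more generally $\ell$ can be bounded in terms of the $p$-adic valuation of the discriminant of $F$.
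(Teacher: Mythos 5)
Your proof is correct, and it takes a genuinely different, more structural route than the paper's. The paper computes: it fixes a $\Z$-basis $\{e_1,\dots,e_d\}$ of $\mathcal{O}_F$, writes the map $\mathcal{O}_F \otimes_\Z \mathcal{O}_K \to \prod_\sigma \mathcal{O}_K$ as the matrix $M_{\underline{\sigma}} = (\sigma_j(e_i))$, invokes the classical identity $\det(M_{\underline{\sigma}})^2 = \Delta_{F/\Q}$, and concludes that the cokernel is finite of order $\#\,\mathcal{O}_K/(\det M_{\underline{\sigma}}) = p^{f\ell}$ (where $(\det M_{\underline{\sigma}}) = \mathfrak{p}^\ell$ and $f$ is the residue degree), so that the cokernel is killed by its order $p^{f\ell}$. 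You instead observe that source and target are full-rank $\mathcal{O}_K$-lattices in $F \otimes_\Q K \cong \prod_\sigma K$ (full rank of the image being exactly where the splitting hypothesis enters, as you correctly flag), so the cokernel is a finitely generated torsion module over the DVR $\mathcal{O}_K$, hence annihilated by $\pi_K^m$ for some $m$, hence by $p^\ell$ once $e_K \ell \geq m$. The trade-off is this: the paper's determinant computation buys an explicit, computable exponent tied to the $p$-adic valuation of the discriminant of $F$, whereas your argument as written proves existence only --- your closing assertion that $\ell$ can be bounded by the valuation of the discriminant is true but not established by your argument. On the other hand, your elementary-divisor reasoning lands directly on the sharper annihilation statement that the paper only obtains in the remark following its proof, where it must separately argue (via indecomposability, i.e.\ the same structure theory you use) that a module of order $p^{f\ell}$ is already killed by $p^\ell$. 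Both arguments suffice for the corollary that follows, which only needs an $\ell$ independent of the input.
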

\begin{proof}
Pick a $\Z$-basis $\{e_1,\dots,e_d\}$ for $\mathcal{O}_F$, and order the embeddings of $F$ into $K$, $\{\sigma_1,\dots,\sigma_d\}$. The $\mathcal{O}_K$-linear map $\mathcal{O}_F \otimes \mathcal{O}_K \to \prod_\sigma \mathcal{O}_K$ is given in this basis by the matrix
\begin{equation}
	M_{\underline{\sigma}} = \begin{pmatrix}
		\sigma_1(e_1) & \dots & \sigma_d(e_1) \\ \vdots & \ddots & \vdots \\ \sigma_1(e_d) & \dots & \sigma_d(e_d)
	\end{pmatrix}
\end{equation}
It is classical that $\det M_{\underline{\sigma}}^2 = \Delta$ is the discriminant of $F/\Q$. Further, the index of the image of this $\mathcal{O}_K$-linear map is $\#\mathcal{O}_K/(\det M_{\underline{\sigma}})$. Let $\mathfrak{p}$ be the maximal ideal in $\mathcal{O}_K$ and write $(\det M_\sigma) = \mathfrak{p}^\ell$ for some $\ell$. We have $\#\mathcal{O}_K/\mathfrak{p}^\ell = p^{f\ell}$, where $f$ is the residue field degree. Thus $p^{f\ell} (r_\sigma)_\sigma \in \mathcal{O}_F \otimes \mathcal{O}_K$ whenever $(r_\sigma)_\sigma \in \prod_\sigma \mathcal{O}_K$. This exponent does not depend on the choice of $r$, so the lemma is proven.
\end{proof}

\begin{rem}
In fact, we can say more. Since $\left(\prod_\sigma \mathcal{O}_K\right)/(\mathcal{O}_F \otimes \mathcal{O}_K)$ is a $\mathcal{O}_K$-module of order $p^{f\ell}$, it is killed by $p^\ell$ \-- the worst case scenario is that the quotient is indecomposable, and thus isomorphic to the indecomposable module $\mathcal{O}_K/\mathfrak{p}^\ell$. This module has size $p^{f\ell}$, but is killed by $p^\ell$.
\end{rem}

We use this to prove more about the Lie algebra $\mathfrak{g}$.

\begin{cor}
Let $X = (X_\sigma)_\sigma \in \mathfrak{g}(\mathcal{O}_K) \subset \prod_\sigma \mathfrak{gl}_2(\mathcal{O}_K) \subset \mathfrak{g}(K)$. View $X_\sigma$ as the tuple $(0,\dots,X_\sigma,\dots,0)$ in $\prod_\sigma \mathfrak{gl}_2(\mathcal{O}_K) \subset \mathfrak{g}(K)$. Then there exists some $\ell$ independent of $X$ such that $p^\ell X_\sigma \in \mathfrak{g}(\mathcal{O}_K)$.
\end{cor}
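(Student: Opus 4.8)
The plan is to reduce the Corollary directly to Lemma \ref{lemmaramificationproductoflocalrings} by observing that the failure of the decomposition $\mathfrak{g} = \bigoplus_\sigma \mathfrak{g}_\sigma$ to be defined over $\mathcal{O}_K$ is governed entirely by the same index computation that appears in the lemma. The key point is that $\mathfrak{g}(\mathcal{O}_K) = \mathfrak{gl}_2(\mathcal{O}_F \otimes \mathcal{O}_K)$ sits inside $\prod_\sigma \mathfrak{gl}_2(\mathcal{O}_K) = \mathfrak{g}(K) \cap \prod_\sigma \mathfrak{gl}_2(\mathcal{O}_K)$ via the same map $\mathcal{O}_F \otimes \mathcal{O}_K \to \prod_\sigma \mathcal{O}_K$ from the lemma, applied entrywise to the four matrix coordinates.

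First I would make the entrywise reduction explicit: an element $X \in \mathfrak{g}(\mathcal{O}_K)$ is a single $2 \times 2$ matrix with entries in $\mathcal{O}_F \otimes \mathcal{O}_K$, and under the splitting isomorphism $\mathcal{O}_F \otimes \mathcal{O}_K \otimes K \cong \prod_\sigma K$ it corresponds to the tuple $(X_\sigma)_\sigma \in \prod_\sigma \mathfrak{gl}_2(K)$, where each $X_\sigma$ is obtained by applying $\sigma$ to each of the four entries. Each individual $X_\sigma$, viewed as the tuple $(0, \dots, X_\sigma, \dots, 0)$, need not lie in the image of $\mathfrak{g}(\mathcal{O}_K)$ precisely because the projection onto a single factor $\mathfrak{gl}_2(\mathcal{O}_K)$ is not defined over $\mathcal{O}_K$ when $p$ ramifies. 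Then I would apply Lemma \ref{lemmaramificationproductoflocalrings} to each of the four matrix entries separately: for each entry, the corresponding tuple in $\prod_\sigma \mathcal{O}_K$ becomes a genuine element of $\mathcal{O}_F \otimes \mathcal{O}_K$ after multiplication by $p^\ell$, with the \emph{same} exponent $\ell = f \cdot \operatorname{val}_{\mathfrak{p}}(\det M_{\underline{\sigma}})$ for all four, since that $\ell$ depends only on $F$, $K$, and the basis, not on the element.

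The only mild subtlety is confirming that multiplying a single-factor tuple $(0, \dots, X_\sigma, \dots, 0)$ by $p^\ell$ does land in $\mathfrak{g}(\mathcal{O}_K)$, i.e. that the lemma's conclusion, which is stated for arbitrary tuples $r \in \prod_\sigma \mathcal{O}_K$, applies equally well to a tuple supported on one coordinate. But this is immediate: a one-coordinate-supported tuple is a special case of an arbitrary tuple in $\prod_\sigma \mathcal{O}_K$, so the lemma gives $p^\ell$ times it in $\mathcal{O}_F \otimes \mathcal{O}_K$ for each of the four entries, hence $p^\ell X_\sigma \in \mathfrak{gl}_2(\mathcal{O}_F \otimes \mathcal{O}_K) = \mathfrak{g}(\mathcal{O}_K)$. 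There is essentially no hard part here; the Corollary is a formal four-entry bookkeeping consequence of the lemma, and the \textbf{main thing to be careful about} is simply stating cleanly that the uniform exponent $\ell$ from the lemma is independent of $X$ because it is independent of the tuple $r$ fed into the lemma.
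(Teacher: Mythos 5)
Your proposal is correct and follows essentially the same route as the paper's own proof: view $X_\sigma$ as a $2\times 2$ matrix whose entries are tuples in $\prod_\sigma \mathcal{O}_K$ supported on the $\sigma$-coordinate, and apply Lemma \ref{lemmaramificationproductoflocalrings} entrywise, noting that the exponent $\ell$ is uniform because the lemma's exponent is independent of the tuple. The paper states this more briefly (``scaling $X_\sigma$ by $p^\ell$ simply scales its entries''), but the content is identical.
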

\begin{proof}
Pick $\ell$ as in Lemma \ref{lemmaramificationproductoflocalrings}. Note that the elements of $\mathfrak{g}(\mathcal{O}_K)$ should be viewed as $2 \times 2$ matrices with entries in $\mathcal{O}_F \otimes \mathcal{O}_K$, while $X_\sigma$ is a priori a tuple of $2 \times 2$ matrices with entries in $\mathcal{O}_K$. We may instead view $X_\sigma$ as a $2 \times 2$ matrix with entries in $\prod_\sigma \mathcal{O}_K$. Scaling $X_\sigma$ by $p^\ell$ simply scales its entries, so by our choice of $\ell$ the entries of $p^\ell X_\sigma$ are in $\mathcal{O}_F \otimes \mathcal{O}_K$, and $p^\ell X_\sigma \in \mathfrak{g}(\mathcal{O}_K)$.
\end{proof}

By the corollary, we have $p^\ell X(D_\sigma,\alpha) \in \mathfrak{g}(\mathcal{O}_K)$. Thus $p^\ell X(D_\sigma,\alpha)$ acts integrally. 
\begin{equation}
	X(D_\sigma,\alpha) \cdot \left(V_\kappa^{\nu}(\mathcal{O}_K)\right)  \subset \frac{1}{p^\ell}V_\kappa^{\nu+\sigma}(\mathcal{O}_K) \subset V_\kappa^{\nu+\sigma}(K).
\end{equation}
On the level of sheaves, the above translates to
\begin{equation}
	\nabla_\sigma(\mathcal{V}_\kappa^\nu(\mathcal{O}_K)) \subset \frac{1}{p^\ell} \mathcal{V}_{\kappa+2\sigma}^{\nu+\sigma}(\mathcal{O}_K) \subset \mathcal{V}_{\kappa+2\sigma}^{\nu+\sigma}(K).
\end{equation}
This allows us to control denominators.

\section{Descent}

\subsection{Two Groups}
The phrase ``Hilbert modular form" is ambiguous in that it can refer to the space of automorphic forms on one of two groups. The group $G = \Res_{\mathcal{O}_F/\Z}\GL_2$ is one. For any commutative ring $R$, its $R$-points are $G(R) = \GL_2(\mathcal{O}_F\otimes_\Z R)$. Determinants of such matrices live in $\T = \Res_{\mathcal{O}_F/\Z}\G_m$. Recall that over $K$, $\T$ is a split torus; its diagonal subgroup is isomorphic to $\G_m$. The second group, denoted $G^*$, is defined as the fiber product $G^* = G \times_\T \G_m$. It is a subgroup of $G$, with $R$-points $G(R) = \{g \in G(R) \mid \det(g) \in 1 \otimes R^\times \subset (\mathcal{O}_F \otimes R)^\times \}$.

Each group has its advantages. The Hecke theory for $G$ is canonical, while the theory for $G^*$ is not, producing a non-commutative Hecke algebra. On the other hand, while both are the groups of interest in a PEL type moduli problem, only the moduli problem associated to $G^*$ is actually representable by a scheme. This is exactly the reason why the previous sections were only concerned with the automorphic forms on $G^*$.

Since $G^*$ is a subgroup of $G$, we may restrict the automorphic forms on $G$ to the group $G^*$, viewing them as automorphic forms on the latter group. As in \cite{AIP2}, we record the geometric condition that distinguishes these restricted forms from other automorphic forms on $G^*$.

Let $f$ be an automorphic form on $G^*$ of weight $\nu^2 \cdot \Nm_{F/\Q}^w$. It can be extended to an automorphic form on $G$ if, for all $\epsilon \in \mathcal{O}_F^{\times,+}$, we have
\begin{equation}
f(A,\iota,\psi,\epsilon\lambda,\omega) = \nu(\epsilon)f(A,\iota,\psi,\lambda,\omega). \label{descent}
\end{equation}

We will avoid using this criterion. It is included to stress the fact that whether or not an automorphic form on $G^*$ extends to $G$ can be detected geometrically. We will opt instead to use a criterion involving the symmetric space. This will be enough, as Proposition \ref{DescentOfConnection} can be stated in terms of classical Hilbert modular forms and easily transported to the space of overconvergent ones.

\subsection{Weights}
In Section \ref{starweights}, we introduced the weight space for overconvergent modular forms on $G^*$; here we continue with the weight space for $G$, including the weight of a restricted modular form.

The space of algebraic weights for $G$ is $\{\text{characters of } \T_{/K}\} \times \Z$, with a map to the algebraic weights for $G^*$ given by $\rho \colon (\nu,w) \mapsto 2\nu + w \Nm_{F/\Q}$. In particular, if $(\nu,w)$ is a weight for $G$, $\rho(\nu,w)(g) = \nu(g)^2 \cdot (\Nm_{F/\Q}(g))^w$ for any $g \in \T(\Z_p)$. If we have a modular form of weight $(\nu,w)$ on $G$, its restriction to $G^*$ will have weight $\rho(\nu,w)$.

The space of $p$-adic weights for $G$ is $\{\text{characters of } \T(\Z_p)\} \times \Z_p$. The map $\rho$ sending a $p$-adic weight for $G$ to a $p$-adic weight for $G^*$ uses the same formula as above.

\subsection{Descent of Modular Forms}
We can quantify the difference between automorphic forms on $G^*$ and automorphic forms on $G$ using the ad\`elic viewpoint. Let $K_\infty^+$ be a maximal compact subgroup of the connected component of the identity $G(\R)^+$ in $G(\R)$, and $K_\infty^*$ the same for $G^*(\R)$. Let $K_0(N)$ and $K_0^*(N)$ be the natural choices of compact open subsets of the finite ad\`eles, consisting of matrices which are upper triangular modulo $N$. The inclusion $G^* \to G$ induces a map on the symmetric spaces $G^*(\R)^+/K_\infty^*Z(G^*(\R)) \to G(\R)^+/K_\infty^+Z(G(\R))$ which ends up being a bijection. Thus automorphic forms on $G$ and automorphic forms on $G^*$ are functions on the same space. The only difference comes from the congruence subgroups

\begin{equation}
\begin{array}{c}
\Gamma_0^*(N) = G^*(\Q) \cap K_0^*(N) = \left\{ \begin{pmatrix} a & b \\ c & d \end{pmatrix} \in \SL_2(\mathcal{O}_F) \mid c \equiv 0 \pmod{N} \right\}, \\
\Gamma_0^G(N) = G(\Q) \cap K_0(N) = \left\{ \begin{pmatrix} a & b \\ c & d \end{pmatrix} \in \GL_2^+(\mathcal{O}_F) \mid c \equiv 0 \pmod{N} \right\}.
\end{array}
\end{equation}

For any $\epsilon \in \mathcal{O}_F^{\times,+}$, let $g_\epsilon = \begin{pmatrix} \epsilon & 0 \\ 0 & 1 \end{pmatrix}$. Equation \eqref{descent} is simply a translation of the fact that $g_\epsilon \in \Gamma_0^G(N)$, while $g_\epsilon \not\in \Gamma_0^*(N)$. In a strict sense, this is the only discrepancy, as $\Gamma_0^*(N)$ and these matrices $g_\epsilon$ together generate $\Gamma_0^G(N)$.

We define an action of $\mathcal{O}_F^{\times,+}$ on the space of automorphic forms on $G^*$ of weight $\rho(\nu,w)$ to be
\begin{equation} \label{OFAction}
\epsilon \cdot f = f|g_\epsilon.
\end{equation}
The discussion above implies that $f$ is a modular form for $G$ if and only if $\mathcal{O}_F^\times$ acts on $f$ via its nebentypus.

\subsection{Descent of Operators}
At the moment, the differential operators $\nabla_\sigma$ are defined as maps that send Hilbert modular forms on $G^*$ to Hilbert modular forms on $G^*$. A priori, if $f$ is a nearly overconvergent Hilbert modular form for $G$, $\nabla_\sigma f$ is only a nearly overconvergent Hilbert modular form for $G^*$. We will argue that it extends to $G$.

\begin{prop} \label{DescentOfConnection}
The action of $\mathcal{O}_F^{\times,+}$ defined in Equation \eqref{OFAction} commutes with the Gauss--Manin connection $\nabla \colon \mathcal{H} \to \mathcal{H} \otimes \Omega^1_X$.
\end{prop}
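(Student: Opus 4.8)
The plan is to show that the action of $\epsilon \in \mathcal{O}_F^{\times,+}$ via $g_\epsilon = \begin{pmatrix} \epsilon & 0 \\ 0 & 1 \end{pmatrix}$ is realized geometrically as an automorphism of the moduli problem, and that the Gauss--Manin connection, being canonically attached to the universal HBAV, is automatically equivariant for any such automorphism. First I would unwind Equation \eqref{OFAction} into its geometric meaning: the element $g_\epsilon$ corresponds to the self-isogeny (or automorphism of the polarization datum) that sends an HBAV-with-data $(A,\iota,\psi,\lambda,\omega)$ to $(A,\iota,\psi,\epsilon\lambda,\omega)$, inducing an automorphism $\Phi_\epsilon$ of the moduli space $X$ (covering the identity on the coarse space, or at worst a morphism of the stack/scheme $X$ that permutes the polarization) together with a compatible lift to the universal abelian variety $\mathcal{A}$. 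The key point is that since $\epsilon$ is a totally positive unit, multiplication by $\epsilon$ on $\mathfrak{c}$ rescales the polarization while leaving $A$, $\iota$, and $\psi$ intact, so $\Phi_\epsilon$ is genuinely an automorphism of the base whose pullback acts on the sheaves $\underline{\omega}$, $\mathcal{H}$, and $\Omega^1_X$.

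Next I would invoke the functoriality of relative de Rham cohomology and its Gauss--Manin connection. The connection $\nabla \colon \mathcal{H} \to \mathcal{H} \otimes \Omega^1_X$ is constructed purely from the morphism $\pi \colon \mathcal{A} \to X$; it is canonical and hence compatible with base change and with automorphisms of the situation. Concretely, for the automorphism $\Phi_\epsilon$ with its lift $\widetilde{\Phi}_\epsilon$ on $\mathcal{A}$, the induced isomorphism $\Phi_\epsilon^* \mathcal{H} \xrightarrow{\sim} \mathcal{H}$ intertwines $\Phi_\epsilon^*\nabla$ with $\nabla$, i.e. the square relating $\nabla$ to its pullback commutes. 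Because the action in Equation \eqref{OFAction} is, on the level of sheaves, exactly this pullback $\Phi_\epsilon^*$ (twisted by the weight character through the homogeneity of the associated-bundle description), the statement that $\epsilon \cdot (-)$ commutes with $\nabla$ is precisely the functoriality of the Gauss--Manin connection under $\Phi_\epsilon$.

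The main obstacle I anticipate is bookkeeping the interaction between the polarization rescaling and the frame bundles $T^\times_{\underline{\omega}}$ and $T^{\times,+}_\mathcal{H}$ on which modular forms are realized as homogeneous functions. I would need to check that $\Phi_\epsilon$ does not disturb the Hodge filtration $\underline{\omega} \subset \mathcal{H}$ (so that it descends to an automorphism of $T^{\times,+}_\mathcal{H}$ and not merely of $T^\times_\mathcal{H}$) and that the polarization only affects the alternating pairing by a scalar, hence fixes the isotropic subspace $\underline{\omega}$. Once this compatibility with the filtration is confirmed, the commutation is formal: the connection is defined by the covariant-derivative formula of Equation \eqref{CovariantDerivative} in terms of the exterior derivative and the connection matrix $X(D,\alpha)$, and the $G$-equivariance relation $X(D,\alpha g) = \operatorname{Ad}(g\inv)(X(D,\alpha))$ together with the naturality of $X(D,\alpha)$ under $\Phi_\epsilon$ yields the result. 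I would close by remarking that, since $\nabla$ commutes with this action, so does the Kodaira--Spencer-twisted operator $\nabla_\sigma$, which is what is ultimately needed to descend the differential operators from $G^*$ to $G$.
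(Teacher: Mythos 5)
Your proposal is correct and takes essentially the same approach as the paper: both arguments come down to the canonicity of the Gauss--Manin connection under the geometric action of $g_\epsilon$ (rescaling the polarization $\lambda \mapsto \epsilon\lambda$, which leaves $(A,\iota,\psi)$ untouched). The paper packages this naturality by observing that the slash operator commutes with the exterior derivative $d$ and that $\nabla$ is built from $d$ via the spectral sequence of the filtered de Rham complex (plus a citation to Shimura), whereas you invoke functoriality of relative de Rham cohomology under the moduli automorphism $\Phi_\epsilon$ --- the same formal content, unpacked at a slightly different level.
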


\begin{proof}
This is classical, and seen e.g. as a special case of \cite[Proposition 12.10(2)]{Shi00}.

For another proof, we note that the slash operator is defined in such a way that it commutes with the exterior derivative $d$ defined for differential forms. Since $\nabla$ arises as a differential in the spectral sequence associated to a filtered de Rham complex whose differentials are this $d$, it commutes with the slash operator as well.
\end{proof}

This leads to our second Main Theorem.

\begin{thm}\label{secondtheorem}
The differential operators $\nabla_\sigma$ constructed in Theorem \ref{theorem} preserve the space of Hilbert modular forms for $G$ inside the space of Hilbert modular forms for $G^*$.
\end{thm}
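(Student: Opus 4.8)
The plan is to reduce the statement to the $\mathcal{O}_F^{\times,+}$-equivariance recorded in Proposition \ref{DescentOfConnection}. Recall from the Descent of Modular Forms subsection that, once the two symmetric spaces are identified, an automorphic form $f$ for $G^*$ of weight $\rho(\nu,w)$ extends to a form for $G$ precisely when $\mathcal{O}_F^{\times,+}$ acts on $f$ through its nebentypus; written in terms of the action of Equation \eqref{OFAction}, this says $f|g_\epsilon = \nu(\epsilon)f$ for every $\epsilon \in \mathcal{O}_F^{\times,+}$. It therefore suffices to show that if $f$ satisfies this equivariance, then so does $\nabla_\sigma f$, now with the nebentypus matching the shifted weight $\rho(\nu+\sigma,w) = \rho(\nu,w)+2\sigma$.

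First I would observe that every ingredient entering $\nabla_\sigma$ is $\mathcal{O}_F^{\times,+}$-equivariant. By Remark \ref{RaiseTypeBySigma} the operator $\nabla_\sigma$ is the composite of the connection $\nabla_{\mathcal{V}_\kappa}$ with the Kodaira--Spencer projection $1\otimes KS_\sigma$, and $\nabla_{\mathcal{V}_\kappa}$ is built functorially from the Gauss--Manin connection on $T^{\times,+}_\mathcal{H}$ via Proposition \ref{ConnectionOnAssociatedBundleWithgAction}. Proposition \ref{DescentOfConnection} already supplies the key fact that the slash action commutes with $\nabla$ on $\mathcal{H}$; since the inclusion $\iota\colon\underline{\omega}\hookrightarrow\mathcal{H}$ and the projection $\mathcal{H}\to\underline{\omega}^\vee$ are geometric, hence equivariant, the induced connection on the associated bundle inherits this equivariance. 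Applying an equivariant operator to an equivariant section produces an equivariant section, and the argument is insensitive to the type, which only records the filtration degree.

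The one place requiring care, and the step I expect to be the main obstacle, is tracking the precise scalar introduced by the Kodaira--Spencer factor $KS_\sigma$. The element $g_\epsilon$ acts geometrically by rescaling the polarization $\lambda\mapsto\epsilon\lambda$, which is the content of the geometric criterion \eqref{descent}, and the Kodaira--Spencer isomorphism \eqref{KS} identifying $\Omega^1_X(\log C)$ with $\bigoplus_\sigma\underline{\omega}_\sigma^{\otimes 2}$ is defined through exactly this polarization pairing. Consequently the slash action on the summand $\underline{\omega}_\sigma^{\otimes 2}$ should pick up a factor of $\sigma(\epsilon)$. I would verify this bookkeeping directly on the frame bundle $T^{\times,+}_\mathcal{H}$: if $f|g_\epsilon = \nu(\epsilon)f$, then after composing with $KS_\sigma$ the equivariance factor becomes $\nu(\epsilon)\sigma(\epsilon) = (\nu+\sigma)(\epsilon)$, which is precisely the nebentypus attached to the $G$-weight $(\nu+\sigma,w)$ whose image under $\rho$ is the new $G^*$-weight $\rho(\nu,w)+2\sigma$. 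This confirms that $\nabla_\sigma f$ satisfies the descent criterion for the shifted weight.

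Finally I would transport from the classical to the overconvergent setting. As the excerpt notes, Proposition \ref{DescentOfConnection} is a statement about the Gauss--Manin connection on $\mathcal{H}$ that transports verbatim to strict neighborhoods of the ordinary locus, and the $(\mathfrak{g},Q_w)$-module construction of $\nabla_{\sigma,w}$ in Section \ref{NOHMFs} is formally identical to the classical one, so the equivariance argument applies unchanged over $\mathcal{X}(\underline{v})$. Iterating the single-step result gives the conclusion for every $\nabla_\sigma^k$, and since the action of $\mathcal{O}_F^{\times,+}$ is linear over the coefficient ring, the claim persists for families parametrized by an affinoid $\mathcal{U}\subset\mathcal{W}_w$. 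Hence each $\nabla_\sigma^k$ carries Hilbert modular forms for $G$ to Hilbert modular forms for $G$, which is Theorem \ref{secondtheorem}.
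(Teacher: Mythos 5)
Your proposal is correct, and its skeleton is the one the paper uses: reduce Theorem \ref{secondtheorem} to the statement that the $\mathcal{O}_F^{\times,+}$-action of Equation \eqref{OFAction} interacts with $\nabla_\sigma$ in the way forced by the weight shift, with Proposition \ref{DescentOfConnection} as the geometric input. Where you genuinely diverge is in the mechanism for the key step. The paper never leaves the torsor formalism: since $T^{\times,+}_\mathcal{H}$ depends only on the substring $(A,\iota)$, the fibers over $x$ and $g_\epsilon \cdot x$ are identified, the slash operator becomes the natural action of $g_\epsilon \in Q$ on $\mathcal{V}_\kappa^{\nu}$, and the required commutation is exactly the equivariance already established in Proposition \ref{ConnectionOnAssociatedBundleWithgAction}, i.e.\ the $(\mathfrak{g},Q)$-compatibility $\operatorname{Ad}(g)(X)\cdot v = g \cdot X \cdot g\inv \cdot v$. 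In that packaging the nebentypus shift is present but hidden in the adjoint twist $\operatorname{Ad}(g_\epsilon)(\mu_\sigma^-) = \sigma(\epsilon)\inv \mu_\sigma^-$. You instead work from the geometric criterion \eqref{descent} and locate the same scalar in the polarization-dependence of the Kodaira--Spencer identification \eqref{KS}; these are two accountings of one factor, since in the paper's setup the trivialization of $\Omega^1_X(\log C)$ defining $D_\sigma$ is induced from the frame precisely through $\operatorname{KS}$ (cf.\ Section \ref{IntegralitySection}). What your route buys is transparency: it makes explicit why the nebentypus moves from $\nu$ to $\nu+\sigma$, matching $\rho(\nu,w)+2\sigma = \rho(\nu+\sigma,w)$, a point the paper's four-line proof leaves implicit, and your sign $\sigma(\epsilon)$ (rather than $\sigma(\epsilon)\inv$) is the correct and load-bearing one --- with the opposite sign the theorem would be false, as $\nu-\sigma$ is not a nebentypus compatible with the new weight. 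What the paper's route buys is economy and robustness: the argument never touches the polarization pairing, is insensitive to the type $\nu$ and the weight $\kappa$, and transports verbatim to the overconvergent torsor $\mathcal{T}^{\times,+}_{\mathcal{H},w}(\underline{v})$, whereas your version must additionally note (as you do) that $\mathcal{X}(\underline{v})$, the canonical subgroup, and the Igusa tower are themselves blind to $\lambda$, so that the slash action exists on the overconvergent spaces at all.
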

\begin{proof}
This is essentially a special case of Proposition \ref{ConnectionOnAssociatedBundleWithgAction}. The fiber of $T^{\times,+}_\mathcal{H}$ over some point $x$ corresponding to a HBAV $(A,\iota,\psi,\lambda)$ only depends on the substring $(A,\iota)$, so we may identify the fibers over any $x$ with the fibers over $g_\epsilon \cdot x$. Under this identification, the slash operator is just the natural action of $Q \ni g_\epsilon$ on $\mathcal{V}_\kappa^{\nu}$, and we proved that this action commutes with $\nabla = \bigoplus_\sigma \nabla_\sigma$ in the previously cited proposition. Thus the fact descends to each $\nabla_\sigma$.
\end{proof}

\bibliographystyle{amsalpha}
\bibliography{AycockDiffOpsVer4Bib}

\providecommand{\bysame}{\leavevmode\hbox to3em{\hrulefill}\thinspace}
\providecommand{\MR}{\relax\ifhmode\unskip\space\fi MR }
\providecommand{\MRhref}[2]{%
  \href{http://www.ams.org/mathscinet-getitem?mr=#1}{#2}
}
\providecommand{\href}[2]{#2}
\begin{thebibliography}{dSG19}

\bibitem[AI21]{AI}
Fabrizio Andreatta and Adrian Iovita, \emph{{K}atz type p-adic {L}-functions
  for primes p non-split in the {CM} field}, 2021.

\bibitem[AIP15]{AIP}
Fabrizio Andreatta, Adrian Iovita, and Vincent Pilloni, \emph{{$p$}-adic
  families of {S}iegel modular cuspforms}, Ann. of Math. (2) \textbf{181}
  (2015), no.~2, 623--697. \MR{3275848}

\bibitem[AIP16]{AIP2}
\bysame, \emph{On overconvergent {H}ilbert modular cusp forms}, Ast\'{e}risque
  (2016), no.~382, 163--193. \MR{3581177}

\bibitem[DR80]{DeligneRibet}
Pierre Deligne and Kenneth~A. Ribet, \emph{Values of abelian {$L$}-functions at
  negative integers over totally real fields}, Invent. Math. \textbf{59}
  (1980), no.~3, 227--286. \MR{579702}

\bibitem[dSG14]{dSG16}
Ehud de~Shalit and Eyal~Z. Goren, \emph{A theta operator on picard modular
  forms modulo an inert prime}, Research in the Mathematical Sciences
  \textbf{3} (2014), 1--65.

\bibitem[dSG19]{dSG19}
Ehud de~Shalit and Eyal~Z. Goren, \emph{Theta operators on unitary {S}himura
  varieties}, Algebra Number Theory \textbf{13} (2019), no.~8, 1829--1877.

\bibitem[EM21]{EM20}
E.~Eischen and E.~Mantovan, \emph{{$p$}-adic families of automorphic forms in
  the {$\mu$}-ordinary setting}, Amer. J. Math. \textbf{143} (2021), no.~1,
  1--52. \MR{4201778}

\bibitem[FC90]{FaltingsChai}
Gerd Faltings and Ching-Li Chai, \emph{Degeneration of abelian varieties},
  Ergebnisse der Mathematik und ihrer Grenzgebiete (3) [Results in Mathematics
  and Related Areas (3)], vol.~22, Springer-Verlag, Berlin, 1990, With an
  appendix by David Mumford. \MR{1083353}

\bibitem[Gra20]{Graziani}
Giacomo Graziani, \emph{Modular sheaves of de rham classes on hilbert formal
  modular schemes for unramified primes}, Ph.D. thesis, 2020.

\bibitem[GS81]{GSDamerell}
Catherine Goldstein and Norbert Schappacher, \emph{S\'{e}ries d'{E}isenstein et
  fonctions {$L$} de courbes elliptiques \`a multiplication complexe}, J. Reine
  Angew. Math. \textbf{327} (1981), 184--218. \MR{631315}

\bibitem[Har81]{Har81}
Michael Harris, \emph{Special values of zeta functions attached to {S}iegel
  modular forms}, Annales scientifiques de l'\'Ecole Normale Sup\'erieure
  \textbf{Ser. 4, 14} (1981), no.~1, 77--120 (en). \MR{82m:10046}

\bibitem[Har86]{Har86}
\bysame, \emph{Arithmetic vector bundles and automorphic forms on {S}himura
  varieties, {II}}, Compositio Mathematica \textbf{60} (1986), no.~3, 323--378
  (en). \MR{869106}

\bibitem[HX14]{HX}
Robert Harron and Liang Xiao, \emph{Gauss--{M}anin connections for {$p$}-adic
  families of nearly overconvergent modular forms}, Ann. Inst. Fourier
  (Grenoble) \textbf{64} (2014), no.~6, 2449--2464. \MR{3331170}

\bibitem[Kat76]{KatzQI}
Nicholas~M. Katz, \emph{{$p$}-adic interpolation of real analytic {E}isenstein
  series}, Ann. of Math. (2) \textbf{104} (1976), no.~3, 459--571. \MR{506271}

\bibitem[Kat78]{Katz}
\bysame, \emph{{$p$}-adic {$L$}-functions for {CM} fields}, Invent. Math.
  \textbf{49} (1978), no.~3, 199--297. \MR{513095}

\bibitem[Liu19]{L1}
Zheng Liu, \emph{Nearly overconvergent {S}iegel modular forms}, Ann. Inst.
  Fourier (Grenoble) \textbf{69} (2019), no.~6, 2439--2506. \MR{4033924}

\bibitem[Liu20]{L2}
\bysame, \emph{{$p$}-adic {$L$}-functions for ordinary families on symplectic
  groups}, J. Inst. Math. Jussieu \textbf{19} (2020), no.~4, 1287--1347.
  \MR{4120810}

\bibitem[Ser72]{Serre}
J.P. Serre, \emph{Formes modulaires et fonctions z{\^e}ta p-adiques}, Proc.
  Internat. Summer School, Univ. Antwerp, 1972, pp.~101--268.

\bibitem[Shi76]{Shi76}
Goro Shimura, \emph{The special values of the zeta functions associated with
  cusp forms}, Communications on Pure and Applied Mathematics \textbf{29}
  (1976), no.~6, 783--804.

\bibitem[Shi00]{Shi00}
\bysame, \emph{Arithmeticity in the theory of automorphic forms}, Mathematical
  Surveys and Monographs, vol.~82, American Mathematical Society, Providence,
  RI, 2000. \MR{1780262}

\bibitem[Urb14]{U}
Eric Urban, \emph{Nearly overconvergent modular forms}, Iwasawa theory 2012,
  Contrib. Math. Comput. Sci., vol.~7, Springer, Heidelberg, 2014,
  pp.~401--441. \MR{3586822}

\end{thebibliography}

\end{document}